\numberwithin{equation}{section}
\newtheorem{theorem}{Theorem}[section]
\newtheorem{proposition}[theorem]{Proposition}
\newtheorem{lemma}[theorem]{Lemma}
\theoremstyle{definition}
\newtheorem{definition}[theorem]{Definition}
\newtheorem{remark}[theorem]{Remark}
\begin{document}

\baselineskip=15pt

\title[Symplectic and orthogonal parabolic bundles]{Pullback and direct image of parabolic Higgs bundles and 
parabolic connections with symplectic and orthogonal structures}

\author[D. Alfaya]{David Alfaya}

\address{Department of Applied Mathematics and Institute for Research in Technology, ICAI School of Engineering,
Comillas Pontifical University, C/Alberto Aguilera 25, 28015 Madrid, Spain}

\email{dalfaya@comillas.edu}

\author[I. Biswas]{Indranil Biswas}

\address{Department of Mathematics, Shiv Nadar University, NH91, Tehsil Dadri,
Greater Noida, Uttar Pradesh 201314, India}

\email{indranil.biswas@snu.edu.in, indranil29@gmail.com}

\author[F.-X. Machu]{Francois-Xavier Machu}

\address{ESIEA, 74 bis Av. Maurice Thorez, 94200 Ivry-sur-Seine, France}

\email{fx.machu@gmail.com}

\subjclass[2020]{14H30, 14H60, 14E20}

\keywords{Parabolic symplectic bundle, parabolic orthogonal bundle, nonabelian Hodge correspondence, 
pullback, direct image, semistability}

\date{}

\begin{abstract}
Given a symplectic (respectively, orthogonal) parabolic vector bundle over a compact Riemann surface, we 
prove that its pullback and direct image through a map between compact Riemann surfaces inherit a natural symplectic 
(respectively, orthogonal) structure. If the parabolic bundle is endowed with a parabolic Higgs field or a 
parabolic connection which are compatible with the symplectic (respectively, orthogonal) structure, then 
its pullback and direct image are also compatible with the resulting symplectic (respectively, orthogonal) 
structure. We also show that these constructions are preserved through the Nonabelian Hodge Correspondence.
\end{abstract}

\maketitle

\section{Introduction}

Let $X$ be a smooth complex projective curve with a subset of marked distinct $n$ points $D\,=\,\{x_1,\,\cdots,\,x_n\}
\, \subset\, X$. A parabolic vector bundle on $(X,\,D)$ is a holomorphic vector bundle $E$
on $X$ endowed with a weighted filtration of the fiber $E_{x_i}$ over each parabolic point $x_i\,\in \,D$,
which is called a parabolic structure. This notion was first introduced by C. S. Seshadri \cite{Se}
in the context of desingularization of moduli spaces of vector bundles. Parabolic bundles play
a key role in the nonabelian Hodge correspondence for noncompact curves \cite{Si}. The notion of parabolic
structure was generalized to $G$-bundles in \cite{BBN}, where $G$ is a reductive
affine algebraic group. In this work we will focus on the case where $G$ is either
the symplectic group or the orthogonal group. In this case, given a parabolic line bundle $L_*$, an $L_*$-valued
parabolic symplectic (respectively, orthogonal) bundle is a parabolic vector bundle $E_*$ endowed with a bilinear map
$$\phi\, : \, E_* \otimes E_* \,\longrightarrow\, L_*$$
which is antisymmetric (respectively, symmetric) satisfying the condition that the map of parabolic bundles
$E_* \,\longrightarrow \, E_*^* \otimes L_*$ induced by adjunction is an isomorphism (see Section \ref{se2}
for details).

In this paper we study the stability of parabolic symplectic and parabolic orthogonal bundles, as well as their 
pullbacks and direct images through nonconstant maps of Riemann surfaces. In \cite{AB} it was shown that the 
pullback and direct image of a parabolic vector bundle inherit a natural parabolic structure.
It was also shown there that such a 
structure is compatible with the pullbacks and direct images of logarithmic Higgs fields, logarithmic 
connections and the Nonabelian Hodge Correspondence for noncompact curves. In this work, we will extend those 
results to parabolic symplectic and parabolic orthogonal structures and prove that the pullback and direct image 
of parabolic vector bundles, parabolic Higgs bundles and parabolic connections with a symplectic
(respectively, orthogonal) structure also inherit natural symplectic (respectively, orthogonal)
structures. We prove that taking pullbacks and direct images of these objects preserve semistability and
polystability and that these constructions are preserved by the Nonabelian Hodge Correspondence for
noncompact curves.

The paper is structured as follows. Parabolic orthogonal bundles and parabolic symplectic bundles are reviewed 
in Section \ref{se2}, and some properties on the semistability and polystability of parabolic bundles with 
symplectic and orthogonal structures are deduced. Parabolic Higgs bundles and parabolic connections compatible 
with symplectic or orthogonal structures and their stability are studied in Section \ref{se3}. The pullback of 
parabolic bundles, parabolic Higgs bundles and parabolic connections endowed with a symplectic or orthogonal 
structure is described in Section \ref{section:pullback}, and the direct images of such structures are described 
through Section \ref{section:pushforward}. Finally, in Section \ref{section:NAHC}, it is proven that the 
noncompact Nonabelian Hodge Correspondence preserves all the described constructions.

\subsection{A brief description of the strategy}

Take an $n$--pointed Riemann surface $(X,\, D)$ and a nonconstant holomorphic map $f\,:\, Y\, \longrightarrow\,
X$ from a compact Riemann surface $Y$. As mentioned before, for any parabolic vector bundle $E_*$ on $(X,\, D)$,
we have a pulled back parabolic vector bundle $f^*E_*$ on $Y$ with parabolic structure on the reduced divisor
$f^{-1}(D)_{\rm red}$. If $D$ is disjoint from the divisor on $X$ over which $f$ is branched, then
the underlying vector bundle for $f^*E_*$ is simply $f^*E$, where $E$ is the vector bundle on $X$ underlying
$E_*$. This pullback operation is compatible with respect to the operations of direct sum and tensor product
of parabolic vector bundles. Also, $f^*E^*_*$ coincides with the parabolic dual of $f^*E_*$. Furthermore,
a homomorphism of parabolic bundles $h\,:\, E_*\, \longrightarrow\, V_*$ pulls back to a homomorphism of
parabolic bundles $f^* h\,:\, f^*E_*\, \longrightarrow\, f^*V_*$. Using these properties of parabolic
pullback, a symplectic (respectively, orthogonal) structure $\phi$ on $E_*$ pulls back to a symplectic
(respectively, orthogonal) structure $f^*\phi$ on $f^* E_*$. Furthermore, a symplectic (respectively,
orthogonal) parabolic Higgs field on $(E_*,\, \phi)$ pulls back to a symplectic (respectively,
orthogonal) parabolic Higgs field on $(f^*E_*,\, f^*\phi)$. The same holds for parabolic connections.

The picture is a little more complicated for direct images. Let $\Phi\, :\, X\, \longrightarrow\, Z$ be a 
nonconstant holomorphic map, where $Z$ is a compact Riemann surface. Given a parabolic vector bundle $E_*$ 
on $(X,\, D)$ we have a parabolic vector bundle $\Phi_*E_*$ on $Z$ with parabolic structure on the union of 
$f(D)$ and the divisor on $Z$ over which $\Phi$ is branched. The vector bundle underlying $\Phi_*E_*$ is 
simply $\Phi_*E$, where $E$ as before is the vector bundle on $X$ underlying $E_*$. As before, we have 
$\Phi_*(E_*\oplus V_*) \,=\, (\Phi_*E_*)\oplus (\Phi_*V_*)$ and $\Phi_* E^*_*\, =\, (\Phi_* E_*)^*$, but 
$\Phi_*(E_*\otimes V_*)$ is not the same as $(\Phi_*E_*)\otimes (\Phi_*V_*)$ (they have different ranks). 
But there is a natural parabolic homomorphism from $(\Phi_*E_*)\otimes (\Phi_*V_*)$ to $\Phi_*(E_*\otimes 
V_*)$. This enables us to construct parabolic symplectic (respectively, orthogonal) structure on 
$\Phi_*E_*$ given a parabolic symplectic (respectively, orthogonal) structure on $E_*$.

\section{Parabolic orthogonal and symplectic bundles}\label{se2}

Let $X$ be a compact connected Riemann surface. The holomorphic cotangent bundle of $X$ will be denoted by $K_X$.
Let
$$
D\,\,:=\,\, \{x_1,\, \cdots,\, x_\ell\}\,\, \subset\,\, X
$$
be a finite subset. The reduced effective divisor $\sum_{i=1}^\ell x_i$ on $X$ will also be denoted by
$D$. For a holomorphic vector bundle $V$ on $X$, the vector bundles $V\otimes {\mathcal O}_X(D)$ and
$V\otimes {\mathcal O}_X(-D)$ on $X$ will be denoted by $V(D)$ and $V(-D)$ respectively.

Take a holomorphic vector bundle $E$ on $X$. A \textit{quasi-parabolic structure} on $E$ is a
strictly decreasing filtration of subspaces
\begin{equation}\label{e1}
E_{x_i}\,=\, E^1_i\, \supsetneq\, E^2_i \,\supsetneq\, \cdots\, \supsetneq\,
E^{n_i}_i \, \supsetneq\, E^{n_i+1}_i \,=\, 0
\end{equation}
for every $1\, \leq\, i\, \leq\, \ell$; here $E_{x_i}$ denotes the fiber
of $E$ over the point $x_i\,\in\, D$. A \textit{parabolic structure} on $E$ is a
quasi-parabolic structure as above together with $\ell$ increasing sequences of real numbers
\begin{equation}\label{e2}
0\, \leq\, \alpha_{i,1}\, <\, \alpha_{i,2}\, <\,
\cdots\, < \, \alpha_{i,n_i}\, < 1\, , \ \ 1\, \leq\, i\, \leq\, \ell \, ;
\end{equation}
the number $\alpha_{i,j}$ in \eqref{e2} is called the parabolic weight of the subspace $E^j_i$ occurring in
the quasi-parabolic filtration in \eqref{e1}. For any $1\,\leq\, j\, \leq\, n_i$, the multiplicity of a parabolic
weight $\alpha_{i,j}$ at $x_i$ is defined to be the dimension of the complex vector space $E^j_i/E^{j+1}_i$.
A \textit{parabolic vector bundle} is a holomorphic vector bundle equipped with a parabolic structure.

The \textit{parabolic degree} of a parabolic vector bundle $$E_*\,:=\, \left(E,\,
\{\{E^j_i\}_{j=1}^{n_i}\}_{i=1}^\ell, \,\{\{\alpha_{i,j}\}_{j=1}^{n_i}\}_{i=1}^\ell\right)$$
is defined to be
$$
\text{par-deg}(E_*)\,=\, \text{degree}(E)+\sum_{i=1}^\ell \sum_{j=1}^{n_i}
\alpha_{i,j}\cdot\dim \left(E^j_i/E^{j+1}_i\right)
$$
\cite[p.~214, Definition~1.11]{MS}, \cite[p.~78]{MY}. The real number
$$
\text{par-}\mu(E_*)\,\, :=\, \frac{\text{par-deg}(E_*)}{\text{rank}(E_*)}
$$
is called the parabolic slope of $E_*$.

See \cite{Yo}, \cite{Bi1} for the operations of parabolic direct sum, parabolic tensor product and parabolic dual.
The holomorphic sections of a parabolic vector bundle $E_*$ are, by definition, the holomorphic sections of
the underlying vector bundle $E$. For parabolic vector bundles $E_*$ and $E'_*$, the global homomorphisms
$E_*\, \longrightarrow\, E'_*$ are identified with the holomorphic sections of the parabolic vector bundle $E'_*\otimes E^*_*$.
For parabolic vector bundles $E_*$ and $E'_*$, the restrictions $(E_*\oplus E'_*)\big\vert_{X\setminus D}$
and $(E_*\otimes E'_*)\big\vert_{X\setminus D}$ are identified with $(E\oplus E')\big\vert_{X\setminus D}$
and $(E\otimes E')\big\vert_{X\setminus D}$ respectively. The vector bundle underlying the parabolic
vector bundle $E_*\otimes E^*_*\,=\, \text{End}(E_*)_*$ coincides with the subsheaf
\begin{equation}\label{e2b}
\text{End}_P(E)\,\, \subset\,\,\text{End}(E)
\end{equation}
defined by all $s\,\in\, \Gamma(U,\, \text{End}(E))$, where $U\, \subset\, X$ is any open
subset, such that $s(E^j_i)\, \subset\, E^j_i$ for all $x_i\, \in\, U$ and $1\, \leq\, j\, \leq\, n_i$.

Consider the subbundle ${\rm ad}(E)\, \subset\, \text{End}(E)$ of co-rank one given by the sheaf of endomorphisms
of $E$ of trace zero. Let
$$
\text{ad}_P(E)\ :=\ \text{End}_P(E)\cap \text{ad}(E) \ \subset\ \text{End}(E)
$$
be the intersection. We note that $\text{End}_P(E)$ decomposes as
\begin{equation}\label{e3}
\text{End}_P(E)\,\, = \,\, {\mathcal O}_X\cdot {\rm Id}_E \oplus (\text{End}_P(E)\cap \text{ad}(E))
\,\,=\,\, {\mathcal O}_X\cdot {\rm Id}_E \oplus \text{ad}_P(E).
\end{equation}

Assume that $E_*$ has a nontrivial parabolic structure at every $x_i$, $1\, \leq\, i\, \leq\, \ell$; this means that
$\alpha_{i,n_i} \, >\, 0$ for all $1\, \leq\, i\, \leq\, \ell$. Let
\begin{equation}\label{e2c}
\text{End}^0_P(E)\,\, \subset\,\,\text{End}_P(E)
\end{equation}
be the subsheaf of $\text{End}_P(E)$ (see \eqref{e2b}) defined by all $s\,\in\, \Gamma(U,\, \text{End}_P(E))$,
where $U\, \subset\, X$ is any open subset, such that $s(E^j_i)\, \subset\, E^{j+1}_i$ for all $x_i\, \in\, U$
and $1\, \leq\, j\, \leq\, n_i$. Denoting $\text{End}^0_P(E)\cap \text{ad}(E)$ by $\text{ad}^0_P(E)$, we have
\begin{equation}\label{e3b}
\text{End}^0_P(E)\,\, = \,\, {\mathcal O}_X(-D)\cdot {\rm Id}_E \oplus (\text{End}^0_P(E)\cap \text{ad}(E))
\,\,=\,\, {\mathcal O}_X(-D)\cdot {\rm Id}_E \oplus \text{ad}^0_P(E).
\end{equation}

Let $L_*$ be a parabolic line bundle on $X$ of parabolic degree $0$, and let
\begin{equation}\label{e4}
\phi\,\,:\,\, E_*\otimes E_*\,\, \longrightarrow\,\, L_*
\end{equation}
be a homomorphism of parabolic vector bundles. Tensoring \eqref{e4} by the parabolic dual $E^*_*$, we have
$$
\phi\otimes {\rm Id}_{E^*_*} \ :\ (E_*\otimes E_*)\otimes E^*_* \ =\ E_*\otimes (E_*\otimes E^*_*)
$$
$$
=\ E_*\otimes\text{End}(E_*)_* \ \longrightarrow\ L_*\otimes E^*_*\,\cong\, E^*_*\otimes L_*.
$$
Restricting this homomorphism to $E_*\,=\, E_*\otimes {\mathcal O}_X\, \subset\, E_*\otimes\text{End}(E_*)_*$,
(the parabolic structure on ${\mathcal O}_X$ is the trivial one; see the decomposition in \eqref{e3}) we
get a homomorphism
\begin{equation}\label{e5}
\widehat{\phi}\,\,:\,\, E_*\,\, \longrightarrow\,\, E^*_* \otimes L_*.
\end{equation}

\begin{definition}\label{def1}
The pairing $\phi$ in \eqref{e4} defines an {\it $L_*$-valued orthogonal structure} on $E_*$ if the pairing
$$
\phi\big\vert_{X\setminus D}\,\,:\,\, (E\big\vert_{X\setminus D})\otimes (E\big\vert_{X\setminus D})
\,\, \longrightarrow\,\, L\big\vert_{X\setminus D}
$$
is symmetric and $\widehat{\phi}$ in \eqref{e5} is an isomorphism of parabolic vector bundles.
For notational simplicity, when $L_*$ is clear form the context, or when $L_*$ is the trivial line
bundle $\mathcal{O}_X$ with the trivial parabolic structure, we will omit the mention to $L_*$ and call
$\phi$ simply an {\it orthogonal structure} on $E_*$.

The pairing $\phi$ defines an {\it $L_*$-valued symplectic structure} on $E_*$ if the above restriction 
$\phi\big\vert_{X \setminus D}$ is anti-symmetric and $\widehat{\phi}$ in \eqref{e5} is an isomorphism of 
parabolic vector bundles. Like as before, when $L_*$ is clear from the context or it is trivial, we call
$\phi$ a {\it symplectic structure} on $E_*$.

If $\phi$ is an $L_*$-valued parabolic symplectic (respectively, parabolic orthogonal) structure on $E_*$, then
the pair $(E_*,\,\phi)$ is called a {\it parabolic symplectic} (respectively, {\it parabolic orthogonal}) bundle.
\end{definition}

\begin{lemma}
\label{lemma:transposition}
Let $\tau\,:\, E_* \otimes E_* \,\longrightarrow\, E_*\otimes E_*$ be defined by $u\otimes v\, \longmapsto\,
v\otimes u$. Then a map $\phi \,:\, E_*\otimes E_* \,\longrightarrow\, L*$ such that
$\widehat\phi \,:\, E_* \,\longrightarrow\, E_*^*\otimes L_*$ is an isomorphism is an orthogonal structure if
and only if $\phi \circ \tau \,=\, \phi$, and it is a symplectic structure if and only if $\phi \circ \tau
\,=\, -\phi$.
\end{lemma}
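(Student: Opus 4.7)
The plan is to show that, although Definition~\ref{def1} only imposes the (anti)symmetry condition on the restriction of $\phi$ to the open complement $X \setminus D$, this condition automatically propagates to the whole parabolic bundle by torsion-freeness of the target. I would first note that the swap map $\tau$ is a genuine automorphism of the parabolic tensor product $E_* \otimes E_*$, since this product is constructed so as to be commutative up to the canonical interchange isomorphism (as recalled from \cite{Yo}, \cite{Bi1} in Section~\ref{se2}). Consequently $\phi \circ \tau$ is again a parabolic homomorphism $E_* \otimes E_* \longrightarrow L_*$.

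For the forward implication in the orthogonal case, I would form the difference $\psi := \phi - \phi \circ \tau$, which is a parabolic homomorphism $E_* \otimes E_* \longrightarrow L_*$ and hence induces a homomorphism of the underlying vector bundles $E \otimes E \longrightarrow L$. The symmetry of $\phi\big\vert_{X\setminus D}$ forces $\psi\big\vert_{X\setminus D} = 0$; since the underlying line bundle $L$ is locally free and therefore torsion-free, any homomorphism into $L$ that vanishes on the dense open set $X \setminus D$ must vanish identically, and hence $\phi \circ \tau = \phi$ on all of $X$. The symplectic case is identical with $\phi + \phi \circ \tau$ in place of $\phi - \phi \circ \tau$. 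Conversely, if the parabolic identity $\phi \circ \tau = \pm \phi$ holds, restriction to $X \setminus D$ is immediate and produces the symmetry or antisymmetry required by Definition~\ref{def1}, which combined with the standing assumption that $\widehat{\phi}$ is an isomorphism yields the desired structure.

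The only step that calls for any care is the assertion that $\tau$ descends to a morphism of parabolic bundles, rather than merely of their restrictions to $X \setminus D$; this is a formal consequence of the construction of the parabolic tensor product, and I expect it to be the single substantive point in the argument. After that, the proof reduces to the standard fact that a morphism into a torsion-free sheaf is determined by its restriction to any dense open subset, so no further computation is needed.
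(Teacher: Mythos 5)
Your proof is correct and follows essentially the same route as the paper: both form the difference $\phi - \phi\circ\tau$ (respectively the sum in the symplectic case) as a global homomorphism of parabolic bundles and observe that its vanishing on the dense open set $X\setminus D$ forces it to vanish identically. The extra detail you supply on $\tau$ being a morphism of parabolic bundles and on torsion-freeness of $L$ is implicit in the paper's one-line argument but not a different idea.
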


\begin{proof}
By Definition \ref{def1}, $\phi$ is an orthogonal structure if and only $(\phi\circ \tau-
\phi)\big\vert_{X\setminus D}\,=\,0$. The section $\phi \circ \tau-\phi \,\in\,
H^0(\text{Hom}(E_*\otimes E_*,\, L_*))$ vanishes over $X\setminus D$ if and only if it vanishes
on $X$, so $\phi$ is an orthogonal structure if and only if $\phi\circ \tau-\phi\,=\,0$. The proof for
the symplectic case is analogous.
\end{proof}

\begin{lemma}\label{lem1}
For any parabolic symplectic (respectively, parabolic orthogonal) vector bundle $(E_*,\,\phi)$,
$${\rm par}\text{-}{\rm deg}(E^*_*)\ =\ 0.$$
\end{lemma}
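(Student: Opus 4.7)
The plan is to exploit the fact that $\widehat\phi$ in \eqref{e5} is an isomorphism of parabolic vector bundles, which means in particular that $E_*$ and $E_*^*\otimes L_*$ have equal parabolic degree. Using the standard formulae for how parabolic degree behaves under tensor product with a line bundle and under parabolic duality, we can then solve for $\mathrm{par}\text{-}\deg(E_*^*)$.

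More precisely, the first step is to invoke $\widehat\phi$ to conclude
$$\mathrm{par}\text{-}\deg(E_*)\,=\,\mathrm{par}\text{-}\deg(E_*^*\otimes L_*).$$
The second step is to apply the identity $\mathrm{par}\text{-}\deg(V_*\otimes L_*)\,=\,\mathrm{par}\text{-}\deg(V_*)+\mathrm{rank}(V)\cdot\mathrm{par}\text{-}\deg(L_*)$ for tensoring a parabolic bundle by a parabolic line bundle (see \cite{Bi1}, \cite{Yo}); since $\mathrm{par}\text{-}\deg(L_*)=0$ by hypothesis, this gives $\mathrm{par}\text{-}\deg(E_*^*\otimes L_*)=\mathrm{par}\text{-}\deg(E_*^*)$. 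The third step is the standard fact $\mathrm{par}\text{-}\deg(E_*^*)=-\mathrm{par}\text{-}\deg(E_*)$. Combining these three yields $\mathrm{par}\text{-}\deg(E_*)=-\mathrm{par}\text{-}\deg(E_*)$, hence $\mathrm{par}\text{-}\deg(E_*)=0$, and therefore $\mathrm{par}\text{-}\deg(E_*^*)=0$ as claimed.

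There is no real obstacle here; the argument is essentially bookkeeping, and the only subtlety is ensuring that one uses the correct normalization conventions for the parabolic weights in the tensor product and dual, so that the additivity of parabolic degree really holds on the nose (this is where one appeals to \cite{Bi1}, \cite{Yo}). The symplectic and orthogonal cases are completely parallel, since the argument only uses that $\widehat\phi$ is an isomorphism of parabolic bundles and that $L_*$ has parabolic degree zero, not the symmetry type of $\phi$.
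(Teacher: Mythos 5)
Your argument is correct and coincides with the paper's own proof: both use that $\widehat\phi$ being an isomorphism forces $\mathrm{par}\text{-}\deg(E_*)=\mathrm{par}\text{-}\deg(E_*^*)+\mathrm{rank}(E)\cdot\mathrm{par}\text{-}\deg(L_*)=-\mathrm{par}\text{-}\deg(E_*)$, whence both degrees vanish. No differences worth noting.
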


\begin{proof}
Since $\text{par-deg}(E^*_*)\,=\, - \text{par-deg}(E_*)$, if $\widehat\phi$ in \eqref{e5} is an
isomorphism and also $\text{par-deg}(L_*)\, =\, 0$, then we have
$$\text{par-deg}(E_*)\,=-\text{par-deg}(E_*)+\text{rank}(E) \text{par-deg}(L_*) \, = \, -\text{par-deg}(E_*),$$
so $\text{par-deg}(E_*)\,=\, 0$. As $\widehat\phi$ is an isomorphism for any parabolic
symplectic or orthogonal vector bundle $(E_*,\, \phi)$, the lemma follows.
\end{proof}

Take a parabolic symplectic or parabolic orthogonal vector bundle $(E_*,\, \phi)$. It is called
\textit{stable} (respectively, \textit{semistable}) if
$$
\text{par-deg}(F_*)\, \, <\,\, 0\ \ \, \text{(respectively, }\ \text{par-deg}(F_*)\, \, \leq\,\, 0\text{)}
$$
for every subbundle $0\, \not=\, F\, \subsetneq\, E$ such that $\phi\big\vert_{X\setminus D}
((F\big\vert_{X\setminus D})\otimes (F\big\vert_{X\setminus D}))\,=\, 0$, where $E$ is the holomorphic
vector bundle on $X$ underlying $E_*$, and $F_*$ is the parabolic vector bundle defined by the
subbundle $F$ equipped with the parabolic structure on it induced by $E_*$.

The following observation is needed in the definitions of polystable parabolic orthogonal and polystable parabolic
symplectic vector bundles.

Let $E_*$ be a parabolic vector bundle on $X$. Then the parabolic vector bundle $E_*\oplus (E^*_*\otimes L)$
has a natural
orthogonal structure as well as a natural symplectic structure. To see this, first note that
$$(E_*\oplus (E^*_*\otimes L_*))^*\otimes L_* \,=\, (E^*_* \oplus (E_*\otimes L_*^*))\otimes L_*
\,=\, (E^*_*\otimes L_*) \oplus E_*$$
Consider the natural isomorphisms
\begin{gather}\begin{aligned}\label{e6}
\widehat{\phi_{\text{orth}}}\ = \ \begin{pmatrix}
0 & \text{Id}_{E_*^*\otimes L_*}\\
\text{Id}_{E_*} & 0
\end{pmatrix} \ : & \\
E_*\oplus (E_*^*\otimes L_*)\ \longrightarrow\
(E_*^*\otimes L_*) \oplus E_*\ & =\ (E_*\oplus (E_*^*\otimes L_*))^*\otimes L_*\\
\widehat{\phi_{\text{symp}}}\ =\ \begin{pmatrix}
0 & \text{Id}_{E_*^*\otimes L_*}\\
-\text{Id}_{E_*} & 0
\end{pmatrix} \ : & \\
\, E_*\oplus (E_*^*\otimes L_*)\ \longrightarrow\ (E_*^*\otimes L_*) \oplus E_* 
\ & =\ (E_*\oplus (E_*^*\otimes L_*))^*\otimes L_*
\end{aligned}\end{gather}
Then, $\widehat{\phi_{\text{orth}}}$ and $\widehat{\phi_{\text{symp}}}$ induce by adjunction maps
$$\phi_{\text{orth}},\, \, \phi_{\text{symp}} \, :\,
(E_*\oplus (E_*^*\otimes L_*)) \otimes (E_*\oplus (E_*^*\otimes L_*)) \longrightarrow L_*.$$
By construction, $\phi_{\text{orth}}$ is symmetric and $\phi_{\text{symp}}$ is anti-symmetric, thus yielding an orthogonal and a symplectic structure respectively.

A parabolic symplectic (respectively, parabolic orthogonal) vector bundle $(E_*,\, \phi)$ is called \textit{polystable}
if the following two conditions hold:
\begin{enumerate}
\item $(E_*,\, \phi)$ is semistable, and

\item $(E_*,\, \phi)\,=\, \bigoplus_{i=1}^n (V_{i,*},\, \phi_i)$, where each $(V_{i,*},\, \phi_i)$ is either a
stable parabolic symplectic (respectively, parabolic orthogonal) vector bundle or there is a stable
parabolic vector bundle $W_{i,*}$ of parabolic degree zero such that $(V_{i,*},\, \phi_i)$ is isomorphic to
$W_{i,*}\oplus (W_{i,*}^*\otimes L_*)$ equipped with the above natural parabolic symplectic (respectively,
parabolic orthogonal) pairing.
\end{enumerate}
The following result allows us to relate the semistability and polystability of a parabolic symplectic or orthogonal bundle with the semistability and polystability of its underlying parabolic vector bundle.

\begin{proposition}\label{prop0}
Let $(E_*,\, \phi)$ be a parabolic symplectic (respectively, parabolic orthogonal) vector bundle on $X$.
\begin{enumerate}
\item The parabolic symplectic (respectively, parabolic orthogonal) vector bundle $(E_*,\, \phi)$ is semistable
if and only if the parabolic vector bundle $E_*$ is semistable.

\item The parabolic symplectic (respectively, parabolic orthogonal) vector bundle $(E_*,\, \phi)$ is polystable
if and only if the parabolic vector bundle $E_*$ is polystable.
\end{enumerate}
\end{proposition}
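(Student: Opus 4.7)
The plan is to treat the two parts in order, reducing each statement to a parabolic-degree computation based on the isomorphism $\widehat\phi : E_* \to E_*^* \otimes L_*$.

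For part (1), the ``if'' direction is immediate: by Lemma \ref{lem1} one has $\text{par-deg}(E_*) = 0$, so if $E_*$ is semistable then every subbundle has par-degree $\leq 0$, and in particular the isotropic ones do. For the converse I would argue by contradiction. Given a subbundle $F \subsetneq E$ with $\text{par-deg}(F_*) > 0$, I define $F^\perp \subset E$ as the kernel of the parabolic surjection $E_* \xrightarrow{\widehat\phi} E_*^* \otimes L_* \twoheadrightarrow F_*^* \otimes L_*$. Using $\text{par-deg}(E_*) = \text{par-deg}(L_*) = 0$, the resulting short exact sequence gives $\text{par-deg}(F_*^\perp) = \text{par-deg}(F_*)$. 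Combined with the identity $(F \cap F^\perp)^\perp = F + F^\perp$ (checked pointwise off $D$ and extended by saturation) and the exact sequence $0 \to F \cap F^\perp \to F \oplus F^\perp \to F + F^\perp \to 0$, this forces $\text{par-deg}((F \cap F^\perp)_*) = \text{par-deg}(F_*) > 0$. Since $F \cap F^\perp$ is isotropic, this contradicts the assumed semistability of $(E_*, \phi)$.

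For the $(\Leftarrow)$ direction of part (2), I start from the decomposition $E_* = \bigoplus_k U_{k,*}^{\oplus m_k}$ into pairwise non-isomorphic stable parabolic bundles of par-slope zero. Schur's lemma applied to $\widehat\phi$ produces an involution $k \mapsto k'$ on the index set with $U_{k,*} \cong U_{k',*}^* \otimes L_*$ and $m_k = m_{k'}$. Non-fixed pairs contribute hyperbolic summands $U_{k,*} \oplus (U_{k,*}^* \otimes L_*)$, matching the second type in the polystable definition. Each fixed index $k$ provides a self-duality of $U_{k,*}$ of a definite (symmetric or antisymmetric) type, so the form on $U_{k,*}^{\oplus m_k}$ is encoded by an invertible (symmetric or antisymmetric) matrix in $\mathrm{Mat}_{m_k}(\mathbb{C})$, which elementary linear algebra normalises into stable symplectic/orthogonal blocks on copies of $U_{k,*}$ or further hyperbolic pairs. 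For the $(\Rightarrow)$ direction it suffices to show that a stable parabolic symplectic/orthogonal bundle $(V_*, \phi)$ has polystable underlying $V_*$, since the hyperbolic summands in the polystable definition are already polystable as vector bundles. Let $S_* \subset V_*$ be the socle, i.e.\ the maximal polystable subbundle of par-slope zero. The same degree computation as in part (1) yields $\text{par-deg}(S_* \cap S_*^\perp) = 0$, so stability of $(V_*, \phi)$ forces $S_* \cap S_*^\perp = 0$ and hence an orthogonal splitting $V_* = S_* \oplus S_*^\perp$. Additivity of the socle under direct sums combined with its uniqueness forces $\mathrm{socle}(S_*^\perp) = 0$; since a nonzero parabolic bundle semistable of par-slope zero has nonzero socle, this forces $S_*^\perp = 0$ and hence $V_* = S_*$ is polystable.

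The main obstacle is verifying the parabolic counterparts of the underlying linear-algebra identities $\text{par-deg}(F_*^\perp) = \text{par-deg}(F_*)$ and $(F \cap F^\perp)^\perp = F + F^\perp$. These hold generically without difficulty, but as statements about parabolic subbundles they require that $\widehat\phi$ be an isomorphism in the parabolic category and that the parabolic structures induced on subbundles, quotients, intersections, and sums interact correctly with the parabolic dual and tensor product (together with $\text{par-deg}(L_*) = 0$). Once this package is in place, the classical $F \cap F^\perp$ and socle arguments for symplectic/orthogonal vector bundles transfer to the parabolic setting essentially verbatim.
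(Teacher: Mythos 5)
Your argument is correct, but it takes a genuinely different route from the paper's in all three nontrivial directions. For the converse of part (1) the paper does not start from an arbitrary destabilizing subbundle: it takes the first step $E^1_*$ of the Harder--Narasimhan filtration, observes that the perpendiculars of that filtration form the Harder--Narasimhan filtration of $E^*_*\otimes L_*$, and concludes from uniqueness that $\widehat\phi(E^1_*)\subseteq (E^1_*)^\perp\otimes L_*$, so the maximal destabilizing subbundle is itself isotropic; this bypasses the $F\cap F^\perp$ bookkeeping entirely. Your version works as well, but note that of the two identities you rely on, only the first is an honest equality in the parabolic category: $\text{par-deg}(F^\perp_*)=\text{par-deg}(F_*)$ holds because $F^\perp=\widehat\phi^{-1}\bigl((E_*/F_*)^*\otimes L_*\bigr)$, whereas $F+F^\perp$ is in general only a subsheaf of $(F\cap F^\perp)^\perp$ with the same saturation, so you should state $\text{par-deg}\bigl((F\cap F^\perp)_*\bigr)\ge \text{par-deg}(F_*)$ --- which is all the contradiction needs. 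For part (2)($\Rightarrow$) the paper again argues by uniqueness of a canonical filtration (the socle filtration) on all of $E_*$ rather than reducing to the stable case, and for (2)($\Leftarrow$) it invokes the Mehta--Seshadri correspondence \cite{MS}: a polystable $E_*$ of parabolic degree $0$ carries a flat unitary logarithmic connection, $\phi$ is automatically flat for the induced connections on $E_*\otimes E_*$ and $L_*$, and flatness of the pairing yields polystability of the pair. Your isotypic-decomposition-plus-Schur argument buys something the paper's does not: it is purely algebraic, avoids representations of $\pi_1(X\setminus D)$, and exhibits explicitly the decomposition into stable symplectic/orthogonal summands and hyperbolic blocks $W_*\oplus(W_*^*\otimes L_*)$ required by the definition of polystability; the price is the case analysis on the symmetry type of the self-duality of each fixed isotypic factor, which is worth spelling out (for an orthogonal $\phi$, a symplectically self-dual factor $U_{k,*}$ forces $m_k$ to be even and contributes hyperbolic blocks rather than stable orthogonal ones, and vice versa). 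The paper's route is shorter but leaves the existence of that decomposition implicit in the flat-unitary picture.
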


\begin{proof}
If the parabolic vector bundle $E_*$ is semistable, then it is evident that the parabolic symplectic (respectively,
parabolic orthogonal) vector bundle $(E_*,\, \phi)$ is semistable.

To prove the converse, assume that
the parabolic symplectic (respectively, parabolic orthogonal) vector bundle $(E_*,\, \phi)$ is semistable. To prove
that the parabolic vector bundle $E_*$ is semistable using contradiction, assume that $E_*$ is not semistable. Let
\begin{equation}\label{h1}
0\,=\, E^0_*\, \subset\, E^1_* \, \subset\, E^2_*\, \subset\, \cdots\, \subset\, E^{\ell-1}_*\, \subset\,
E^{\ell}_* \, =\, E_*
\end{equation}
be the Harder--Narasimhan filtration of $E_*$ (see \cite{HL} for Harder--Narasimhan filtration). Note that $\ell \,
\geq\, 2$ because it is assumed that $E_*$ is not semistable. For a parabolic subbundle $\iota \, :\, W_*\, \hookrightarrow
\, E_*$, the kernel of the parabolic dual homomorphism $\iota^*\, :\, E^*_* \, \longrightarrow\, W^*_*$ will be denoted by
$(W_*)^\perp$. From the general properties of the Harder--Narasimhan filtration it follows immediately that
$$
0\,=\, (E^{\ell}_*)^\perp \, \subset\, (E^{\ell -1}_*)^\perp \, \subset\, (E^{\ell- 2}_*)^\perp \, \subset\, \cdots\, \subset\,
(E^2_*)^\perp \, \subset\, (E^1_*)^\perp \, \subset\, (E^0_*)^\perp \, =\, E^*_*
$$
is the Harder--Narasimhan filtration of the parabolic dual $E^*_*$ (see \eqref{h1}). Since tensoring
with a parabolic line bundle preserves semistability,
\begin{equation}\label{h2b}
0\,=\, (E^{\ell}_*)^\perp\otimes L_* \, \subset\, (E^{\ell -1}_*)^\perp\otimes L_* \, \subset\, \cdots\, \subset\,
 (E^1_*)^\perp\otimes L_* \, \subset\, (E^0_*)^\perp\otimes L_* \, =\, E^*_*\otimes L_*
\end{equation}
is the Harder--Narasimhan filtration of $E_*^*\otimes L_*$. On the other hand, $\widehat{\phi}$
in \eqref{e5} is an isomorphism $E_*\, \stackrel{\sim}{\longrightarrow}\, E^*_*\otimes L_*$. Thus, from
the uniqueness of the Harder--Narasimhan filtration we have
$$
\widehat{\phi}(E^1_*)\ =\ (E^{\ell -1}_*)^\perp\otimes L_* \ \subseteq\ (E^1_*)^\perp \otimes L^*,
$$
but this implies that $\phi\big\vert_{X\setminus D} ((E^1_*\big\vert_{X\setminus D})\otimes 
(E^1_*\big\vert_{X\setminus D}))\,=\, 0$. Consequently, the parabolic subbundle $E^1_*\, \subset\, E_*$ 
contradicts the given condition that the parabolic symplectic (respectively, parabolic orthogonal) vector 
bundle $(E_*,\, \phi)$ is semistable.

In view of the above contradiction we conclude that the parabolic vector
bundle $E_*$ is semistable. This proves statement (1) of the proposition.

To prove (2), first assume that the parabolic symplectic (respectively, parabolic orthogonal) vector bundle $(E_*,\, \phi)$
is polystable. In particular, $(E_*,\, \phi)$ is semistable. From (1) we know that the parabolic vector bundle $E_*$ is
semistable. To prove that $E_*$ is polystable using contradiction, assume that $E_*$ is not polystable. Let
\begin{equation}\label{h3}
0\,=\, E^0_*\, \subset\, E^1_* \, \subset\, E^2_*\, \subset\, \cdots\, \subset\, E^{\ell-1}_*\, \subset\,
E^{\ell}_* \, =\, E_*
\end{equation}
be the socle filtration of $E_*$ (see \cite[p.~23, Lemma 1.5.5]{HL} for the socle filtration); so for
any $1\, \leq\, i\,\leq\, \ell-1$, the parabolic quotient $E^i_*/E^{i-1}_*$ is the unique maximal polystable
subbundle of $E_*/E^{i-1}_*$ with $\text{par-}\mu(E^i_*/E^{i-1}_*)\,=\, \text{par-}\mu(E_*)\,=\, 0$. We have
$\ell \,\geq\, 2$ because of the assumption that the parabolic semistable vector bundle $E_*$ is not
polystable. The
parabolic dual $E^*_*$ is semistable because $E_*$ is so. Hence $E^*_*$ has a socle filtration. Also, the dual
of a parabolic polystable vector bundle is again parabolic polystable. Hence from
the general properties of the socle filtration it follows that
$$
0\,=\, (E^{\ell}_*)^\perp \, \subset\, (E^{\ell -1}_*)^\perp \, \subset\, (E^{\ell- 2}_*)^\perp \, \subset\, \cdots\, \subset\,
(E^2_*)^\perp \, \subset\, (E^1_*)^\perp \, \subset\, (E^0_*)^\perp \, =\, E^*_*
$$
is the socle filtration of the parabolic dual $E^*_*$ (see \eqref{h3}). Since tensoring with a parabolic
line bundle preserves both semistability and polystability,
\begin{equation}\label{h4b}
0\,=\, (E^{\ell}_*)^\perp\otimes L_* \, \subset\, (E^{\ell -1}_*)^\perp\otimes L_* \, \subset\, \cdots\, \subset\,
(E^1_*)^\perp\otimes L_* \, \subset\, (E^0_*)^\perp\otimes L_* \, =\, E^*_*\otimes L_*
\end{equation}
is the socle filtration of the parabolic tensor product $E_*^*\otimes L_*$. Recall that $\widehat{\phi}\, :\,
E_*\, \longrightarrow\, E^*_* \otimes L_*$ in \eqref{e5} is an isomorphism. So from the uniqueness of the socle
filtration it follows immediately that
\begin{equation}\label{h5}
\widehat{\phi}(E^1_*)\ =\ (E^{\ell -1}_*)^\perp\otimes L_* \ \subseteq\ (E^1_*)^\perp\otimes L_*.
\end{equation}
so $\phi(E^1_*\otimes E^1_*)\,=\,0$. Since $(E_*,\, \phi)$ is polystable, and $\text{par-}\mu(E^1_*)\,=\, 0$, from \eqref{h5} it follows that $E^1_*\, \subset\, E_*$
has a direct summand. Since $E_*$ is parabolic semistable of parabolic degree zero, the a direct
summand is again parabolic semistable of parabolic degree zero. 
Adding to $E^1_*$ the socle of a direct summand of $E^1_*$ we get a polystable subbundle of $E_*$ which
strictly contains $E^1_*$. But this contradicts the fact that $E^1_*$ is the unique maximal polystable parabolic subbundle of
$E_*$ of parabolic degree zero. In view of this contradiction we conclude that the
parabolic vector bundle $E_*$ is parabolic polystable.

To prove the converse, assume that the parabolic vector bundle $E_*$ is polystable. Since 
$\text{par-deg}(E_*)\,=\, 0$ (see Lemma \ref{lem1}), we know that $E_*$ given by a homomorphism
$$
\rho\ :\ \pi_1(X\setminus D,\, x_0)\ \longrightarrow\ \text{U}(r),
$$
where $r\,=\, {\rm rank}(E_*)$ and $x_0\, \in\, X\setminus S$ is a base point (see \cite{MS}). So
$E_*\otimes E_*$ is given by the representation
$$
\rho\otimes\rho \ :\ \pi_1(X\setminus D,\, x_0)\ \longrightarrow\ \text{U}({\mathbb C}^r\otimes{\mathbb C}^r)
\ =\ \text{U}(r^2).
$$
Hence the holomorphic vector bundle $(E_*\otimes E_*)_0$ underlying the parabolic tensor product $E_*
\otimes E_*$ is equipped with a logarithmic connection singular over $D$ \cite{MS}; the definition of a
logarithmic connection is recalled in Section \ref{se3.1}. This logarithmic
connection on $(E_*\otimes E_*)_0$ given by $\rho\otimes\rho$ will be denoted by $\mathbf\nabla$.

There is a unique unitary logarithmic connection $D_{L}\,:\, L
\,\longrightarrow\, L\otimes K_X(D)$ such that ${\rm Res}(D_L,\,x_i)\,=\,\beta_i$
for all $1\, \leq\, i\, \leq\, \ell$ (see \cite{De}, \cite{Ka}).
The homomorphism $\phi\,:\, E_*\otimes E_*\, \longrightarrow\, L_*$ (see \eqref{e4}) is flat (same as integrable)
with respect to the connection $D_L$ on $L_*$ and the logarithmic connection $\mathbf\nabla$
on $E_*\otimes E_*$; this is because the parabolic slopes of $E_*\otimes E_*$ and $L_*$ coincide. This implies that
the logarithmic connection on $E_*$ preserves the bilinear form $\phi$. Consequently, $(E_*,\, \phi)$ is polystable.
\end{proof}

\begin{remark}\label{rem1}
Proposition \ref{prop0} does not extend to stable parabolic bundles. More precisely, if the parabolic 
symplectic (respectively, parabolic orthogonal) vector bundle $(E_*,\, \phi)$ is stable, then the parabolic 
vector bundle $E_*$ need not be stable. To give an example, let $(E^1_*,\, \phi_1)$ and $(E^2_*,\, \phi_2)$ 
be parabolic symplectic (respectively, parabolic orthogonal) vector bundles such that $E_1$ and and $E_2$ 
are both stable with $E^1_*\, \not=\, E^2_*$. Then $(E^1_*\oplus E^2_*,\, \phi_1\oplus\phi_2)$ is a stable 
parabolic symplectic (respectively, parabolic orthogonal) vector bundle, but $E^1_*\oplus E^2_*$ is not 
stable.

To give the simplest example of the above type, let $L$ be a nontrivial holomorphic line bundle on $X$ of order two.
Note that both ${\mathcal O}_X$ and $L$ have a natural orthogonal structure. Consider the orthogonal structure on
$L\oplus {\mathcal O}_X$ obtained by taking the direct sum of the orthogonal structures on $L$ and ${\mathcal O}_X$.
Then the resulting rank two orthogonal bundle is stable, but the underlying vector bundle $L\oplus {\mathcal O}_X$
is not stable.

A similar result to Proposition \ref{prop0} can be obtained from \cite[Proposition 5.6, Proposition 5.7 and 
Corollary 6.2]{BMW}, using different techniques to obtain the equivalence between the polystability of 
parabolic orthogonal/symplectic bundles and the polystability of their underlying parabolic vector bundles.
\end{remark}

\section{Higgs bundles and connections}\label{se3}

\subsection{Parabolic connections}\label{se3.1}

Let $V$ be a holomorphic vector bundle on $X$. A \textit{logarithmic connection} on
$V$ singular over $D$ is a holomorphic differential operator
$$
{\nabla}\, :\, V\, \longrightarrow\, V\otimes K_X(D)
$$
satisfying the Leibniz identity which states that
\begin{equation}\label{f4}
{\nabla}(fs)\,=\, f{\nabla}(s)+ s\otimes df
\end{equation}
for any locally
defined holomorphic function $f$ on $X$ and any locally defined holomorphic section $s$ of $V$ (see
\cite{De}, \cite{At}). So a logarithmic connection on $V$ produces a holomorphic connection
$V\big\vert_{X\setminus D}$.

For any $y\, \in\, D$, the Poincar\'e adjunction formula gives an isomorphism
\begin{equation}\label{pa}
K_X(D)_y \, \stackrel{\sim}{\longrightarrow}\, {\mathbb C}
\end{equation}
(see \cite[p.~146]{GH} for the Poincar\'e adjunction formula).
Let ${\nabla}^V\, :\, V\, \longrightarrow\, V\otimes K_X(D)$ be a logarithmic connection
on $V$ singular over $D$. From \eqref{f4} it follows that the composition of homomorphisms
\begin{equation}\label{p2}
V\, \xrightarrow{\,\,\ \nabla^V\,\ }\, V\otimes K_X(D) \, \longrightarrow\,
(V\otimes K_X(D))_y \,\stackrel{\sim}{\longrightarrow}\, V_y
\end{equation}
is ${\mathcal O}_X$--linear; the isomorphism $(V\otimes K_X(D))_y\,\stackrel{\sim}{\longrightarrow}
\, V_y$ in \eqref{p2} is given by the isomorphism in \eqref{pa}, and the homomorphism
$V\otimes K_X(D) \, \longrightarrow\, (V\otimes K_X(D))_y$ is the restriction map. Therefore,
the composition of homomorphisms in \eqref{p2} produces a $\mathbb C$--linear homomorphism
$$
{\rm Res}({\nabla}^V,\,y)\, :\, V_y\, \longrightarrow\, V_y\, ,
$$
which is called the \textit{residue} of ${\nabla}^V$ at $y$; see \cite{De}.

Let $E_*\,:=\, \left(E,\,
\{\{E^j_i\}_{j=1}^{n_i}\}_{i=1}^\ell, \,\{\{\alpha_{i,j}\}_{j=1}^{n_i}\}_{i=1}^\ell\right)$ be a parabolic vector bundle. A \textit{quasiparabolic connection} on $E_*$ is a logarithmic connection ${\nabla}$ on $E$, singular over
$D$, such that $\text{Res}(\nabla,\,x_i)(E^j_i)\, \subset\, E^j_i$ for all $1\,\leq\, j\,\leq\, n_i$,
$1\,\leq\, i\, \leq\, \ell$ (see \eqref{e1}).

We will also consider the special class of quasiparabolic connections $\nabla$ on $E_*$ satisfying the extra
condition that the endomorphism of $E^j_i/E^{j+1}_i$ induced by
$\text{Res}(\nabla,\,x_i)$ coincides with multiplication by the parabolic weight $\alpha_{i,j}$ for all $1\,\leq\,
j\,\leq\, n_i$, $1\,\leq\, i\, \leq\, \ell$ (see \cite[Section~2.2]{BL}). This special class of quasiparabolic connections
$\nabla$ will be called \textit{parabolic connections}.

Given two quasiparabolic connections $(E_*,\,\nabla)$ and $(E_*',\,\nabla')$, their tensor product has
a natural quasiparabolic connection given by $(E_*,\,\nabla)\otimes (E'_*,\,\nabla')
\,=\, (E_*\otimes E_*',\, \nabla \otimes\text{Id}_{E'_*} + \text{Id}_{E_*}\otimes \nabla')$.
The dual $E_*^*$ also has a naturally induced quasiparabolic connection $\nabla^*\,:\,
E^*\,\longrightarrow\, E^*\otimes K_X(D)$ defined as
$$\langle \nabla u,\, v^* \rangle + \langle u, \,\nabla^* v^*\rangle \,=\, d\langle u,v^* \rangle$$
for each local section $u$ of $E$ and local section $v^*$ of $E^*$.

Now, let $L_*$ be a parabolic line bundle of parabolic degree $0$. Let $\{\beta_i\}$ be its
parabolic weights. As mentioned before, there exists a unique unitary logarithmic connection $D_{L}\,:\, L
\,\longrightarrow\, L\otimes K_X(D)$ such that ${\rm Res}(D_L,\,x_i)\,=\,\beta_i$
for all $1\, \leq\, i\, \leq\, \ell$ (see \cite{De}, \cite{Ka}). 

Take a parabolic vector bundle $E_*$ equipped with a 
parabolic symplectic or a parabolic orthogonal structure
$\phi\, :\, E_*\otimes E_* \longrightarrow L_*$. A quasiparabolic connection $\nabla \, : \, E
\,\longrightarrow\, E\otimes K_X(D)$ on $E_*$ is said to be \textit{compatible} with the
parabolic symplectic or parabolic orthogonal structure if the
map $\phi$ is flat (equivalently integrable) with respect to the quasiparabolic connection
$\nabla\otimes \text{Id}_{E_*} + \text{Id}_{E_*} \otimes \nabla$ induced on $E_*\otimes E_*$ by $\nabla$
and the unique unitary parabolic connection $D_L$ on $L_*$.
This condition means that for each pair of local sections $u,\, v$ of $E$,
\begin{equation}\label{eq:compconn}
(\phi\otimes \text{Id}_{K_X(D)}) ( \nabla(u) \otimes v) + (\phi\otimes \text{Id}_{K_X(D)})(u\otimes \nabla(v))
\,=\, D_L ( \phi(u\otimes v)).
\end{equation}

Observe that $\nabla$ is compatible with $\phi$ if and only if the isomorphism
$\widehat\phi\, : \, E_*\longrightarrow E_*^* \otimes L_*$ (see \eqref{e5})
is an isomorphism of quasiparabolic connections between $\nabla$ and the natural quasiparabolic
connection $\nabla^* \otimes \text{Id}_L+\text{Id}_{E_*^*} \otimes D_L$ on
$E_*^*\otimes L_*$ induced by $\nabla$ and $D_L$. In other words, $\nabla$ and $\phi$ are compatible
if the following diagram is commutative
\begin{eqnarray}
\label{eq:compconn2}
\xymatrixcolsep{6pc}
\xymatrix{
E \ar[r]^{\widehat{\phi}} \ar[d]_{\nabla} & E^*\otimes L \ar[d]^{\nabla^* \otimes \text{Id}_L +
\text{Id}_{E^*} \otimes D_L}\\
E\otimes K_X(D) \ar[r]^{\widehat\phi \otimes \text{Id}_{K_X(D)}} & E^*\otimes L \otimes K_X(D)\, .
}
\end{eqnarray}

A \textit{quasiparabolic connection} on $(E_*,\, \phi)$ is a quasiparabolic connection ${\nabla}$ on $E_*$, 
singular over $D$, which is compatible with $\phi$. As before, we say that $\nabla$ is a \textit{parabolic 
connection} on $(E_*,\, \phi)$ if $\nabla$ is a parabolic connection on $E_*$ and it is a quasiparabolic
connection on $(E_*,\, \phi)$. So a quasiparabolic connection $\nabla$ on $(E_*,\, \phi)$ us a parabolic
connection if the residue of $\nabla$ at $x_i$ acts on $E^j_i/E^{j+1}_i$, $j\, \leq\, i\,
\leq\, n_i$, by multiplication with the parabolic weight $\alpha_{i,j}$.

\begin{proposition}\label{prop-1-con}
Let $(E_*,\, \phi,\, \nabla)$ be a parabolic symplectic (respectively, orthogonal)
bundle on $X$ with a compatible quasiparabolic connection $\nabla$.
\begin{enumerate}
\item The quasiparabolic symplectic (respectively, orthogonal) connection $(E_*,\, \phi,\, \nabla)$ is semistable
if and only if the quasiparabolic connection $(E_*,\, \nabla)$ is semistable.

\item If $\nabla$ is a parabolic connection on $(E_*,\, \phi)$, then $(E_*,\, \phi,\, \nabla)$ is semistable
if and only if $(E_*,\, \nabla)$ is semistable.

\item If $\nabla$ is a quasiparabolic connection on $(E_*,\, \phi)$, then $(E_*,\, \phi,\, \nabla)$ is polystable
if $(E_*,\, \nabla)$ is polystable.
\end{enumerate}
\end{proposition}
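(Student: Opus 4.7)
The plan is to adapt the strategy used in Proposition \ref{prop0} to the category of (quasi)parabolic connections. The essential new input is the commutativity of diagram \eqref{eq:compconn2}: the isomorphism $\widehat{\phi}\,:\,E_*\,\longrightarrow\,E_*^*\otimes L_*$ intertwines $\nabla$ with the induced connection $\nabla^*\otimes\text{Id}_L+\text{Id}_{E_*^*}\otimes D_L$, hence it is an isomorphism in the category of (quasi)parabolic connections. This allows the uniqueness of Harder--Narasimhan and socle filtrations to be applied in that category, which is the mechanism underlying Proposition \ref{prop0}.

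For statements (1) and (2), one direction is immediate: if $(E_*,\,\nabla)$ is semistable, then $(E_*,\,\phi,\,\nabla)$ is semistable, because its semistability only tests $\nabla$-invariant parabolic subbundles which are additionally $\phi$-isotropic. For the converse, I would assume that $(E_*,\,\nabla)$ is not semistable in the relevant category and take its Harder--Narasimhan filtration
$$
0\,=\,E^0_*\,\subset\,E^1_*\,\subset\,\cdots\,\subset\,E^{\ell}_*\,=\,E_*
$$
with $\ell\,\geq\,2$; each $E^i_*$ is $\nabla$-invariant. Passing to perpendiculars and tensoring by $L_*$ yields the Harder--Narasimhan filtration of $(E_*^*\otimes L_*,\,\nabla^*\otimes\text{Id}_L+\text{Id}\otimes D_L)$ (using $\text{par-deg}(L_*)\,=\,0$), and uniqueness of this filtration combined with the isomorphism $\widehat{\phi}$ forces $\widehat{\phi}(E^1_*)\,=\,(E^{\ell-1}_*)^\perp\otimes L_*\,\subseteq\,(E^1_*)^\perp\otimes L_*$. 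Consequently $E^1_*$ is $\phi$-isotropic, and since $\text{par-deg}(E^1_*)\,>\,0$ by construction, this contradicts the semistability of $(E_*,\,\phi,\,\nabla)$. The argument applies verbatim in the quasiparabolic category of (1) and in the parabolic category of (2); the only difference is the category in which the Harder--Narasimhan filtration is taken.

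For statement (3), suppose that $(E_*,\,\nabla)$ is polystable and decompose it as an isotypic sum
$$
(E_*,\,\nabla)\,\cong\,\bigoplus_i (V_{i,*},\,\nabla_i)^{\oplus m_i}
$$
of pairwise non-isomorphic stable quasiparabolic connections. Correspondingly, $(E_*^*\otimes L_*,\,\nabla^*\otimes\text{Id}_L+\text{Id}\otimes D_L)$ splits into blocks $(V_{i,*}^*\otimes L_*,\,\nabla_i^*\otimes\text{Id}_L+\text{Id}\otimes D_L)^{\oplus m_i}$. Schur's lemma for stable connections implies that $\widehat{\phi}$ matches isotypic blocks. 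When $V_{i,*}\,\not\cong\,V_{i,*}^*\otimes L_*$ as connections, the $V_{i,*}$-block pairs with a distinct block indexed by some $j$ with $V_{j,*}\,\cong\,V_{i,*}^*\otimes L_*$ and $m_i\,=\,m_j$, contributing hyperbolic summands of the form $W_*\oplus(W_*^*\otimes L_*)$ with the standard pairing of \eqref{e6}. When $V_{i,*}\,\cong\,V_{i,*}^*\otimes L_*$, the form $\phi$ induces a non-degenerate bilinear form of the same symmetry type on the multiplicity space $\mathbb{C}^{m_i}$, whose orthogonal decomposition splits the block into stable parabolic symplectic (respectively, orthogonal) summands together with hyperbolic pairs. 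Assembling these summands yields the desired polystable decomposition of $(E_*,\,\phi,\,\nabla)$; overall semistability follows from statement (1).

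The main obstacle will be the self-dual isotypic case in statement (3): one must verify that the bilinear form induced on the multiplicity space $\mathbb{C}^{m_i}$ inherits the correct parity (symmetric or anti-symmetric) from $\phi$, that it is flat with respect to the trivial connection on that space (a consequence of the compatibility of $\phi$ with $\nabla$ combined with Schur's lemma), and that its non-degeneracy follows from that of $\widehat{\phi}$. Only then can one diagonalize it into stable symplectic/orthogonal one-dimensional blocks and hyperbolic pairs of the appropriate parity, and match them coherently with the standard form \eqref{e6}. Once this combinatorial step is carried out, the remaining pieces are routine.
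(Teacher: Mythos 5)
Your treatment of statements (1) and (2) is essentially the paper's: the paper simply notes that a quasiparabolic connection admits a canonical Harder--Narasimhan filtration and repeats the argument of Proposition \ref{prop0}(1) verbatim, which is exactly the perpendicular-filtration/uniqueness argument you give; your observation that (2) reduces to (1) because a parabolic connection is in particular a quasiparabolic connection is also the paper's one-line proof. For statement (3) you take a genuinely different route. The paper again defers to Proposition \ref{prop0}(2), whose relevant direction there is proved by invoking the Mehta--Seshadri unitary representation attached to a polystable parabolic bundle of degree zero and arguing that $\phi$ is flat for the induced connection; in the connection setting this reduction is only asserted, and it is not immediate, since a polystable quasiparabolic connection of degree zero need not come from a unitary representation. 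Your isotypic decomposition plus Schur's lemma argument works intrinsically in the category of quasiparabolic connections (the key inputs being that $\widehat{\phi}$ intertwines $\nabla$ with $\nabla^*\otimes\mathrm{Id}_L+\mathrm{Id}\otimes D_L$ and that stable connections of equal slope have only scalar endomorphisms), so it buys a self-contained and arguably more robust proof; the price is the case analysis on the multiplicity spaces, which you flag but do not fully carry out. That analysis is standard (in the self-dual case the pairing on a block $V^{\oplus m}$ factors as $\psi\otimes B$ with $\psi$ the essentially unique pairing on $V$, forcing the parity of $B$, and diagonalizing or hyperbolically splitting $B$ yields summands of exactly the two shapes allowed in the definition of polystability), so I would not call it a gap, but it should be written out rather than deferred if this is to replace the paper's appeal to Proposition \ref{prop0}.
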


\begin{proof}
A quasiparabolic connection $(E_*,\, \nabla)$ admits a canonical Harder--Narasimhan filtration.
So the proof of the first statement of the proposition is identical to the proof of the first statement of
Proposition \ref{prop0}.

The second statement of the proposition follows from the first statement because a parabolic connection
on $(E_*,\, \phi)$ is a quasiparabolic on $(E_*,\, \phi)$.

The proof of the third statement is identical to the proof of the statement that
a parabolic symplectic (respectively, parabolic orthogonal) vector bundle $(E_*,\, \phi)$ is polystable
if and only if the parabolic vector bundle $E_*$ is polystable (see Proposition \ref{prop0}).
\end{proof}

\subsection{Higgs bundles}

Let $E_*$ be a parabolic vector bundle on $X$. A \textit{parabolic Higgs field} on $E_*$ is a holomorphic section
$$
\theta\ \in \ H^0(X,\, \text{ad}_P(E)\otimes K_X(D)).
$$
(see \eqref{e3} for $\text{ad}_P(E)$). We say that a parabolic Higgs field $\theta$ is \textit{strongly parabolic} if, moreover,
$$
\theta\ \in \ H^0(X,\, \text{ad}^0_P(E)\otimes K_X(D))
$$
(see \eqref{e3b} for $\text{ad}^0_P(E)$). If $(E_*,\,\theta)$ and $(E'_*,\,\theta')$ are two parabolic
Higgs bundles, then $\theta\otimes \text{Id}_{E'_*}+\text{Id}_{E_*}\otimes \theta'$ is a parabolic
Higgs field on $E_*\otimes E'_*$. Also, $\theta$ induces a parabolic Higgs field $\theta^*
\,=\, -\theta^t \, : \, E^*\longrightarrow E^*\otimes K_X(D)$ on the dual parabolic $E_*^*$
as follows:
$$\langle \theta(u),\, v^*\rangle + \langle u,\, \theta^*(v^*)\rangle \,=\, 0$$
(the two terms take values in $K_X(D)$).

Let $(E_*,\, \phi)$ be a parabolic symplectic or orthogonal vector bundle on $X$. We say that a parabolic
Higgs field $\theta\,:\, E_*\,\longrightarrow\, E_*\otimes K_X(D)$ is compatible with $\phi$ if
\begin{equation}
\label{eq:compatibleHiggs1}
(\phi\otimes \text{Id}_{K_X(D)}) ( \theta(u) \otimes v) + (\phi\otimes \text{Id}_{K_X(D)})(u\otimes \theta(v))
\,=\, 0.
\end{equation}
Observe that this is analogous to the compatibility condition in \eqref{eq:compconn} for connections.
Equivalently, $\theta$ is compatible with $\phi$ if and only if the isomorphism
$\widehat\phi \, : \, E_* \,\longrightarrow E_*^*\, \otimes L_*$ in \eqref{e5} induces an
isomorphism of parabolic Higgs fields between $\theta$ and $\theta^*\otimes \text{id}_L$, i.e.,
if the following diagram is commutative:
\begin{eqnarray}
\label{eq:compatibleHiggs2}
\xymatrixcolsep{6pc}
\xymatrix{
E \ar[r]^{\widehat{\phi}} \ar[d]_{\theta} & E^*\otimes L \ar[d]^{\theta^* \otimes \text{Id}_L}\\
E\otimes K_X(D) \ar[r]^{\widehat\phi \otimes \text{Id}_{K_X(D)}} & E^*\otimes L \otimes K_X(D)\, .
}
\end{eqnarray}
A \textit{parabolic symplectic (respectively, parabolic orthogonal) Higgs bundle} is a parabolic
symplectic (respectively, parabolic orthogonal) vector bundle equipped with a compatible parabolic
Higgs field. A \textit{strongly parabolic symplectic (respectively, orthogonal) Higgs bundle} is
a parabolic symplectic (respectively, orthogonal) vector bundle equipped with a compatible
strongly parabolic Higgs field.

Let us provide an alternative useful characterization of parabolic symplectic and orthogonal Higgs bundles. Using the isomorphism
$\widehat{\phi}$ in \eqref{e5}, we have the isomorphism
\begin{equation}\label{e7}
\text{End}(E_*)\ \cong \ E_*\otimes E^*_* \otimes L_* \otimes L_*^* \ \xrightarrow{\,\,\,{\rm Id}_{E_*}
\otimes \widehat{\phi}^{-1} \otimes \text{id}_{L^*},\,\,} \ E_*\otimes E_* \otimes L_*^* 
\end{equation}
$$
=\ (\text{Sym}^2(E_*) \otimes L_*^*) \oplus (\bigwedge\nolimits^2 E_* \otimes L_*^*),
$$
where $\text{Sym}^2(E_*)$ is the parabolic symmetric product and $\bigwedge\nolimits^2 E_*$ is the parabolic
exterior product.

\begin{proposition}
\label{prop:parabolicHiggsdef}
Let $(E_*,\,\theta)$ be a parabolic Higgs bundle. A parabolic symplectic structure $\phi\,:\,E_*\otimes E_*
\,\longrightarrow\, L_*$ is compatible with $\theta$ if and only if, under the isomorphism \eqref{e7},
$$\theta\, \in \, H^0\left (X,\, {\rm Sym}^2(E_*)\otimes L_*^*\otimes K_X(D) \right).$$
Similarly, a parabolic orthogonal structure $\phi$ is compatible with $\theta$ if and only if
$$\theta\, \in \, H^0\left (X,\, \left (\bigwedge\nolimits^2 E_*\right)\otimes L_*^*\otimes K_X(D) \right).$$
\end{proposition}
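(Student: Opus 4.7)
The plan is to unwind the isomorphism \eqref{e7} on local sections, plug the resulting expression for $\theta$ into the compatibility relation \eqref{eq:compatibleHiggs1}, and read off the (anti)symmetry of the corresponding tensor using the fact that $\widehat\phi$ is an isomorphism. Morally this is the Higgs analogue of the bilinear form transposition computation in Lemma \ref{lemma:transposition}.

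Concretely, I would write the preimage of $\theta$ under \eqref{e7} locally as $\widetilde\theta \,=\, \sum_i u_i \otimes v_i \otimes \lambda_i^*$ in $E_* \otimes E_* \otimes L_*^*$ (absorbing the twist by $K_X(D)$ into the local sections). Unwinding \eqref{e7} together with the identity $\widehat\phi(w)(x) \,=\, \phi(w,x)$ shows that $\theta$ acts locally by
$$\theta(x)\,=\,\sum_i u_i \cdot \phi(v_i,x)\cdot \lambda_i^*\,,$$
so the two terms on the left hand side of \eqref{eq:compatibleHiggs1} compute to $\sum_i \phi(u_i,v)\phi(v_i,u)\lambda_i^*$ and $\sum_i \phi(u,u_i)\phi(v_i,v)\lambda_i^*$, respectively. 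Setting $B(u,v)\,:=\,\sum_i \phi(u_i,u)\phi(v_i,v)\lambda_i^*$ and rewriting $\phi(u,u_i) \,=\, \pm\phi(u_i,u)$ via the (anti)symmetry of $\phi$, the compatibility \eqref{eq:compatibleHiggs1} becomes
$$B(v,u)+B(u,v)\,=\,0\ \text{(orthogonal case)},\qquad B(v,u)-B(u,v)\,=\,0\ \text{(symplectic case)},$$
that is, $B$ is antisymmetric in the orthogonal case and symmetric in the symplectic case.

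To conclude, I would observe that the assignment $\widetilde\theta\mapsto B$ coincides with the map $\widehat\phi \otimes \widehat\phi \otimes {\rm Id}_{L_*^*}$ from $E_*\otimes E_*\otimes L_*^*$ to the $L_*$-valued bilinear forms on $E_*$, and that this assignment is an isomorphism (since $\widehat\phi$ is) which commutes with the swap $\tau$ in both source and target. Consequently, the (anti)symmetry of $B$ is equivalent to the (anti)symmetry of $\widetilde\theta$ in its first two factors, meaning $\widetilde\theta \,\in\, H^0(X,\,{\rm Sym}^2(E_*)\otimes L_*^*\otimes K_X(D))$ in the symplectic case and $\widetilde\theta \,\in\, H^0(X,\,\bigwedge\nolimits^2 E_*\otimes L_*^*\otimes K_X(D))$ in the orthogonal case, as required. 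The main difficulty is purely bookkeeping: carefully tracking the $L_*$, $L_*^*$ and $K_X(D)$ twists through \eqref{e7} and verifying the $\tau$-equivariance of the identification; once that is in place, the conclusion follows formally from the (anti)symmetry of $\phi$.
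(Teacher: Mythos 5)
Your argument is correct and is essentially the paper's own proof: both unwind the isomorphism \eqref{e7}, substitute into \eqref{eq:compatibleHiggs1}, use the (anti)symmetry of $\phi$ to show the bilinear form $(u,v)\mapsto\langle\widehat\phi(u),\theta(v)\rangle$ (your $B$, up to swapping arguments) is symmetric in the symplectic case and antisymmetric in the orthogonal case, and then transport this back through the $\tau$-equivariant isomorphism $\widehat\phi\otimes\widehat\phi$ to read off the symmetry type of $\widetilde\theta$. The only difference is presentational: you work componentwise with $\widetilde\theta=\sum_i u_i\otimes v_i\otimes\lambda_i^*$ while the paper phrases the same identification as composing with $(\widehat\phi^{-1})^{\otimes 2}$.
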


\begin{proof}
Let $\phi$ be a parabolic symplectic structure. From equation \eqref{eq:compatibleHiggs1} and the fact that $\phi$ is antisymmetric, a parabolic symplectic structure is compatible with $\theta$ if and only if
$$(\phi\otimes \text{Id}_{K_X(D)})(u\otimes \theta(v))\,=\,- (\phi\otimes \text{Id}_{K_X(D)})
(\theta(u)\otimes v) \,=\, (\phi\otimes \text{Id}_{K_X(D)})(v\otimes \theta(u))$$
for each pair of local sections $u$ and $v$ of $E$. Thus, the map $(\phi \otimes \text{Id}_{K_X(D)})
\circ (\text{Id}_E \otimes \theta) \,:\, E_*\otimes E_* \,\longrightarrow\, L_* \otimes K_X(D)$
is symmetric; observe that this map can be rewritten as a contraction
$$(\phi\otimes \text{Id}_{K_X(D)})(u\otimes \theta(v)) \,=\,
 \langle \widehat\phi(u),\, \theta(v)\rangle .$$
Composing with the isomorphism map $(\widehat\phi^{-1} )^{\otimes 2}\,:\, E_*^* \otimes E_*^* \otimes L_*^2
\,\longrightarrow\, E_*\otimes E_*$, we obtain a symmetric map
\begin{equation}\label{j1}
E_*^*\otimes E_*^* \otimes L_*^2 \ \longrightarrow\ L_*\otimes K_X(D)
\end{equation}
which defines a section
$$\widetilde{\theta} \, \in \, H^0\left(X,\, \text{Hom}(E_*^*\otimes E_*^* \otimes L_*^2, L_*\otimes
K_X(D))\right)\, =\, H^0\left (X,\, E_*\otimes E_* \otimes L_*^* \otimes K_X(D) \right).$$
As the map in \eqref{j1} is symmetric, we have $\widetilde{\theta} \, \in \, H^0\left (X,\, \text{Sym}^2(E_*)
\otimes L_*^* \otimes K_X(D) \right)$. Locally, the expression of $\widetilde{\theta}$ is given by
$$\widetilde{\theta}(u^*,\,v^*)\ =\ \langle u^*,\, \theta( \phi^{-1}(v^*))\rangle$$
so, by construction, the section $\widetilde{\theta}$ corresponds to $\theta \,\in\,
H^0(X,\, \text{End}(E_*)\otimes K_X(D))$ under the isomorphism in \eqref{e7}.

The proof for parabolic orthogonal Higgs bundles is completely analogous; the map $(\phi \otimes 
\text{Id}_{K_X(D)}) \circ (\text{id}_E \otimes \theta)$ is antisymmetric in this case because $\phi$ 
is now symmetric.
\end{proof}

Let $(E_*,\, \phi, \, \theta)$ be a parabolic symplectic or orthogonal Higgs bundle. It is called
\textit{stable} (respectively, \textit{semistable}) if
$$
\text{par-deg}(F_*)\, \, <\,\, 0\ \ \, \text{(respectively, }\ \text{par-deg}(F_*)\, \, \leq\,\, 0\text{)}
$$
for every subbundle $0\, \not=\, F\, \subsetneq\, E$ such that $\theta (F)
\, \subset\, F\otimes K_X(D)$ and $\phi\big\vert_{X\setminus D}
((F\big\vert_{X\setminus D})\otimes (F\big\vert_{X\setminus D}))\,=\, 0$; as before, $F_*$ is the parabolic vector
bundle given by $F$ equipped with the parabolic structure induced by $E_*$.
A strongly parabolic symplectic or orthogonal Higgs bundle is called
\textit{stable} (respectively, \textit{semistable}) if it is \textit{stable} (respectively, \textit{semistable})
as a parabolic symplectic or orthogonal Higgs bundle.

We will now define polystable parabolic symplectic Higgs and polystable parabolic orthogonal Higgs bundles.

Take a parabolic Higgs vector bundle $(W_*,\, \theta_W)$ on $X$, where
$$
\theta_W\ \in\ H^0(X,\, \text{End}_P(E)\otimes K_X(D))
$$
(see \eqref{e2b} for $\text{End}_P(E)$). As seen in Section \ref{se2}, using \eqref{e6} the parabolic
vector bundle $W_*\oplus W^*_*$ has both symplectic and orthogonal structures. The Higgs field $\theta_W$ on
$W_*$ induces a Higgs field $\theta^*_W$ on the parabolic dual $W^*_*$. Now $W_*\oplus W^*_*$ has the Higgs field $\theta_W
\oplus -\theta^*_W$. For the above mentioned parabolic orthogonal (respectively, parabolic symplectic) structure on the parabolic vector bundle
$W_*\oplus W^*_*$, this $\theta_W\oplus -\theta^*_W$ is a Higgs field on the parabolic orthogonal (respectively, parabolic symplectic)
vector bundle.

A parabolic symplectic (respectively, parabolic orthogonal) Higgs bundle $(E_*,\, \phi, \, \theta)$ is
called \textit{polystable} if the following two conditions hold:
\begin{enumerate}
\item $(E_*,\, \phi,\, \theta)$ is semistable, and

\item $(E_*,\, \phi)\,=\, \bigoplus_{i=1}^n (V_{i,*},\, \phi_i,\, \theta_i)$, where each $(V_{i,*},\, \phi_i,\,
\theta_i)$ is either a stable parabolic symplectic (respectively, parabolic orthogonal) Higgs vector bundle or there is a polystable
parabolic Higgs vector bundle $(W_{i,*}, \theta_{W_i})$ of parabolic degree zero such that $(V_{i,*},\, \phi_i)$ is isomorphic to
$W^*_{i,*}\oplus W_{i,*}$ equipped with the above natural parabolic symplectic (respectively, parabolic orthogonal) pairing
and the Higgs field $\theta_{W_i} \oplus -\theta^*_{W_i}$ on the parabolic symplectic (respectively, parabolic orthogonal) vector bundle.
\end{enumerate}

\begin{proposition}\label{prop-1}
Let $$(E_*,\, \phi,\, \theta)$$ be a parabolic symplectic (respectively, parabolic orthogonal) Higgs bundle on $X$.
\begin{enumerate}
\item The parabolic symplectic (respectively, parabolic orthogonal) Higgs bundle $$(E_*,\, \phi,\, \theta)$$ is semistable
if and only if the parabolic Higgs bundle $(E_*,\, \theta)$ is semistable.

\item The parabolic symplectic (respectively, parabolic orthogonal) Higgs bundle $$(E_*,\, \phi,\, \theta)$$ is polystable
if and only if the parabolic Higgs bundle $(E_*,\, \theta)$ is polystable.
\end{enumerate}
\end{proposition}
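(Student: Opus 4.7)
For statement (1), the ``if'' direction is immediate: any $\theta$--invariant, $\phi$--isotropic subbundle is in particular $\theta$--invariant, so Higgs semistability of $(E_*,\,\theta)$ implies semistability of $(E_*,\,\phi,\,\theta)$. For the converse, the key structural input is that, by the commutative diagram \eqref{eq:compatibleHiggs2}, the isomorphism $\widehat\phi$ upgrades to an isomorphism of parabolic Higgs bundles $(E_*,\,\theta)\,\longrightarrow\,(E_*^*\otimes L_*,\,\theta^*\otimes\text{Id}_L)$. Assume $(E_*,\,\phi,\,\theta)$ is semistable but $(E_*,\,\theta)$ is not, and let
\[0\,=\,E^0_*\,\subset\,E^1_*\,\subset\,\cdots\,\subset\,E^{\ell}_*\,=\,E_*\]
be the Harder--Narasimhan filtration of the parabolic Higgs bundle $(E_*,\,\theta)$. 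Dualizing term by term and tensoring with the parabolic Higgs line bundle $(L_*,\,0)$, both of which preserve Higgs semistability, produces the Harder--Narasimhan filtration of $(E_*^*\otimes L_*,\,\theta^*\otimes\text{Id}_L)$. Since $\widehat\phi$ is an isomorphism of parabolic Higgs bundles, uniqueness of the Harder--Narasimhan filtration forces
\[\widehat\phi(E^1_*)\,=\,(E^{\ell-1}_*)^\perp\otimes L_*\,\subseteq\,(E^1_*)^\perp\otimes L_*,\]
so $E^1_*$ is a $\theta$--invariant, $\phi$--isotropic parabolic subbundle of positive parabolic degree, contradicting the semistability of $(E_*,\,\phi,\,\theta)$.

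The forward direction of (2) follows the same pattern with the socle filtration of $(E_*,\,\theta)$ in place of its Harder--Narasimhan filtration: polystability of $(E_*,\,\phi,\,\theta)$ gives semistability of $(E_*,\,\theta)$ via (1), hence the socle filtration exists and its first step $E^1_*$ is the unique maximal polystable parabolic Higgs subbundle of parabolic slope zero; the same Higgs-dual and $\widehat\phi$ argument shows $E^1_*$ is $\phi$--isotropic; polystability of $(E_*,\,\phi,\,\theta)$ then produces a $\theta$--invariant direct complement of $E^1_*$ in $E_*$, and adjoining the Higgs socle of this complement to $E^1_*$ strictly enlarges the maximal polystable Higgs subbundle, yielding a contradiction.

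For the converse of (2), assume $(E_*,\,\theta)$ is parabolic polystable of parabolic degree zero. By the nonabelian Hodge correspondence for parabolic Higgs bundles on noncompact curves due to Simpson and Mochizuki, $(E_*,\,\theta)$ admits a tame harmonic metric and induces a reductive representation of $\pi_1(X\setminus D)$, while $(L_*,\,0)$ carries its canonical parabolic harmonic metric. Since $\text{par-}\mu(E_*\otimes E_*)\,=\,\text{par-}\mu(L_*)\,=\,0$, the parabolic Higgs homomorphism $\phi$ is a morphism between polystable parabolic Higgs objects of the same parabolic slope, and is therefore flat for the induced harmonic flat connections. Consequently $\phi$ is preserved by the harmonic metric on $E_*$, the associated reductive representation takes values in the orthogonal (respectively, symplectic) group, and $(E_*,\,\phi,\,\theta)$ is polystable as a parabolic orthogonal (respectively, symplectic) Higgs bundle. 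This last step is the main obstacle: all the other implications reduce to manipulations of the canonical Harder--Narasimhan and socle filtrations together with the isomorphism $\widehat\phi$, but producing a $\phi$--compatible polystable decomposition genuinely requires the analytic input of the existence and functorial properties of the tame harmonic metric under duality and tensor product.
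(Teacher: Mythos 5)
Your proposal is correct and follows essentially the same route as the paper: the paper's proof of this proposition simply says to repeat the argument of Proposition \ref{prop0} with straightforward modifications, and your write-up carries out exactly those modifications (Harder--Narasimhan and socle filtrations of the parabolic Higgs bundle combined with the isomorphism $\widehat\phi$, and the Higgs analogue of the Mehta--Seshadri/unitary-representation step, namely the tame harmonic metric, for the converse of (2)).
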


\begin{proof}
The proof is exactly similar to the proof of Proposition \ref{prop0}. The required modifications are
straightforward. The details are omitted.
\end{proof}

\section{Pullback of parabolic bundles with symplectic and orthogonal structures}\label{section:pullback}

\subsection{Pullback of parabolic Higgs bundles}

Take $(X,\, D)$ as before. Let $Y$ be a compact connected Riemann surface and
\begin{equation}\label{a5}
f\, :\, Y\, \longrightarrow\, X
\end{equation}
a nonconstant holomorphic map. For each $x_i\, \in\, D$, let
\begin{equation}\label{a6}
f^{-1}(x_i)_{\rm red}\,=\, \{y_{i,1},\, \cdots,\, y_{i,b_i}\}\, \subset\, Y
\end{equation}
be the set-theoretic inverse image. The divisor $\sum_{j=1}^{b_i} y_{i,j}$ on $Y$ will also be denoted
by $f^{-1}(x_i)_{\rm red}$. Define the finite subset
\begin{equation}\label{a7}
B\,\,\,:=\,\, \,\bigcup_{i=1}^\ell f^{-1}(x_i)_{\rm red}\,\,\,=\,\,\, f^{-1}(D)_{\rm red}\,\, \subset\, \,Y .
\end{equation}
The divisor $\sum_{z\in B} z$ on $Y$ will also be denoted by $B$.

Given a parabolic vector bundle $E_*$ on $X$ with parabolic structure over $D$, there is a naturally associated
parabolic vector bundle $f^*E_*$ on $Y$ with parabolic structure over the divisor $B$ constructed in \eqref{a7}
(see \cite[Section 3]{AB}). For another parabolic vector bundle $F_*$ on $X$ with parabolic structure over $D$ we have
\begin{equation}\label{e8}
f^*(E_*\oplus F_*) \,=\, (f^*E_*)\oplus (f^*F_*),\ \ f^*(E_*\otimes F_*) \,=\, (f^*E_*)\otimes (f^*F_*),\ \
f^*(E^*_*) \,=\, (f^*E_*)^*
\end{equation}
(see \cite[p.~19559, Lemma 3.3]{AB} and \cite[p.~19560, Remark 3.4]{AB}).

\begin{proposition}\label{prop1}
Let $\phi\, :\, E_*\otimes E_*\, \longrightarrow\, L_*$ be an $L_*$--valued parabolic symplectic (respectively,
parabolic orthogonal) structure on $E_*$ (see Definition \ref{def1}). Then $f^*\phi$ is an $f^*L_*$--valued
parabolic symplectic (respectively, parabolic orthogonal) structure on the pulled back parabolic vector
bundle $f^*E_*$.

In particular, the pullback of an $\mathcal{O}_X$--valued parabolic orthogonal (respectively, symplectic)
structure on $E_*$ is an $\mathcal{O}_Y$--valued parabolic orthogonal (respectively, symplectic) structure
on $f^*E_*$.
\end{proposition}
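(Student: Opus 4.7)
The plan is to verify the two defining conditions of an $f^*L_*$-valued parabolic symplectic/orthogonal structure on $f^*E_*$ for the map $f^*\phi$, leveraging the compatibility of parabolic pullback with direct sums, tensor products and duals recorded in \eqref{e8}.

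First, I would make sense of $f^*\phi$ as a bilinear pairing. Applying $f^*$ to the parabolic homomorphism $\phi\,:\,E_* \otimes E_* \,\longrightarrow\, L_*$ yields a parabolic homomorphism $f^*\phi\,:\,f^*(E_* \otimes E_*) \,\longrightarrow\, f^*L_*$, and by the tensor product compatibility in \eqref{e8} the source is canonically identified with $(f^*E_*) \otimes (f^*E_*)$. So $f^*\phi$ is a bilinear pairing of the required type.

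Next, I would check symmetry/antisymmetry using Lemma \ref{lemma:transposition}. Let $\tau_X$ denote the parabolic flip on $E_* \otimes E_*$ and $\tau_Y$ the flip on $(f^*E_*) \otimes (f^*E_*)$. Under the canonical identification $f^*(E_* \otimes E_*) \,=\, (f^*E_*) \otimes (f^*E_*)$, the pulled back flip $f^*\tau_X$ agrees with $\tau_Y$ (this is immediate from the construction of the parabolic tensor product, which proceeds pointwise over $Y \setminus B$ where it is the ordinary tensor product). Hence from $\phi \circ \tau_X \,=\, \pm \phi$ we obtain $(f^*\phi) \circ \tau_Y \,=\, \pm\, f^*\phi$, and Lemma \ref{lemma:transposition} then gives the desired symmetry (orthogonal case) or antisymmetry (symplectic case) for $f^*\phi$, provided that we already know $\widehat{f^*\phi}$ is an isomorphism.

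The more substantive step is to show that the adjoint map $\widehat{f^*\phi}\,:\, f^*E_* \,\longrightarrow\, (f^*E_*)^* \otimes f^*L_*$ is an isomorphism. My approach is to show that $\widehat{f^*\phi}$ coincides with $f^*(\widehat{\phi})$ under the canonical identifications of \eqref{e8}. The construction of $\widehat{\phi}$ from $\phi$ in \eqref{e5} uses only the tensor product, the parabolic dual, and the evaluation/coevaluation of parabolic bundles; each of these operations commutes with $f^*$ by \eqref{e8} and the functoriality of $f^*$ on parabolic homomorphisms. Consequently, the diagram defining $\widehat{\phi}$ pulls back to the diagram defining $\widehat{f^*\phi}$, yielding the identification $\widehat{f^*\phi} \,=\, f^*(\widehat{\phi})$ after composing with $f^*(E_*^* \otimes L_*) \,=\, (f^*E_*)^* \otimes f^*L_*$. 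Since $\widehat{\phi}$ is a parabolic isomorphism and $f^*$ takes parabolic isomorphisms to parabolic isomorphisms (being a functor preserving identities and inverses), $\widehat{f^*\phi}$ is an isomorphism. The final assertion about the $\mathcal{O}_X$-valued case is then immediate since $f^*\mathcal{O}_X \,=\, \mathcal{O}_Y$ as parabolic line bundles with trivial parabolic structure.

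The main obstacle I foresee is purely bookkeeping: carefully writing the identification $\widehat{f^*\phi} \,=\, f^*(\widehat{\phi})$ through the chain of canonical isomorphisms $f^*(E_* \otimes E_* \otimes E_*^*) \,=\, f^*E_* \otimes f^*E_* \otimes (f^*E_*)^*$ and $f^*(L_* \otimes E_*^*) \,=\, f^*L_* \otimes (f^*E_*)^*$, and verifying that the restriction of $E_* \otimes \text{End}(E_*)_*$ to the subsheaf $E_* \otimes \mathcal{O}_X \cdot \mathrm{Id}_{E_*}$ used in \eqref{e5} also commutes with pullback. All of these compatibilities reduce, away from $B$, to standard natural transformations of sheaf functors and extend uniquely to $Y$; but the verification should be spelled out at the level of the diagrams used in the definition of $\widehat{\phi}$.
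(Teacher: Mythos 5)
Your proposal is correct and follows essentially the same route as the paper: both arguments use the compatibility of parabolic pullback with tensor products and duals from \eqref{e8} to make sense of $f^*\phi$ as a pairing, identify the adjoint $\widehat{f^*\phi}$ with the pullback $f^*\widehat{\phi}$, and conclude it is an isomorphism because $f^*$ preserves parabolic isomorphisms. Your explicit symmetry check via Lemma \ref{lemma:transposition} is a small extra care the paper leaves implicit (the restriction of $f^*\phi$ to $Y\setminus B$ being the ordinary pullback of $\phi\vert_{X\setminus D}$), but it does not change the substance of the argument.
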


\begin{proof}
{}From \eqref{e8} we have $f^*(E_*\otimes E_*)\,=\, (f^*E_*)\otimes (f^*E_*)$. Consequently, $f^*\phi$ is a homomorphism
$$
f^*\phi\, \, :\, \, f^*(E_*\otimes E_*)\,\,=\,\, (f^*E_*)\otimes (f^*E_*) \,\, \longrightarrow\,\, f^*L_*.$$
Next note that $f^*E^*_*\, =\, (f^*E_*)^*$ (see \eqref{e8}). Let
\begin{equation}\label{c1}
\widehat{f^*\phi}\ :\ f^*E_*\ \longrightarrow\ (f^*E_*)^* \otimes f^*L_* \ =\ f^*E^*_* \otimes f^*L_*
\end{equation}
be homomorphism of parabolic vector bundles given by the above pairing $f^*\phi\,:\,
(f^*E_*)\otimes (f^*E_*) \, \longrightarrow\, f^*L_*$. Since homomorphisms of parabolic
vector bundles produce homomorphisms of parabolic pullbacks, the isomorphism
$\widehat{\phi}\,:\, E_*\, \longrightarrow\, E^*_*\otimes L_*$ in \eqref{e5} pulls back to an isomorphism
$$f^* \widehat{\phi}\ :\ f^*E_*\ \longrightarrow\ f^*E^*_*\ =\ (f^*E_*)^*\otimes f^*L_*$$
(see \eqref{e8} for the above isomorphism $f^*E^*_*\, =\, (f^*E_*)^*$). On the other hand, this
homomorphism $f^* \widehat{\phi}$ clearly coincides with the homomorphism $\widehat{f^*\phi}$
in \eqref{c1}. Consequently, the fact that $f^* \widehat{\phi}$ is an isomorphism implies that
$\widehat{f^*\phi}$ is an isomorphism as well. This implies that $f^*\phi$ is an
$f^* L_*$--valued parabolic symplectic (respectively,
parabolic orthogonal) structure on the pulled back parabolic vector bundle $f^*E_*$.
\end{proof}

\begin{lemma}\label{lem6}\mbox{}
\begin{enumerate}
\item A parabolic symplectic (respectively, parabolic orthogonal) vector bundle $(E_*,\, \phi)$ on $X$
is semistable if and only if the pulled back parabolic symplectic (respectively, parabolic orthogonal) vector bundle
$(f^*E_*,\, f^*\phi)$ on $Y$ is semistable.

\item A parabolic symplectic (respectively, parabolic orthogonal) vector bundle $(E_*,\, \phi)$ on $X$
is polystable if and only if the parabolic symplectic (respectively, parabolic orthogonal) vector bundle
$(f^*E_*,\, f^*\phi)$ on $Y$ is polystable.
\end{enumerate}
\end{lemma}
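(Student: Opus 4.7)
The plan is to reduce Lemma \ref{lem6} to the analogous statement for the underlying parabolic vector bundles. By Proposition \ref{prop1}, the pair $(f^*E_*,\, f^*\phi)$ is itself an $f^*L_*$-valued parabolic symplectic (respectively, parabolic orthogonal) bundle on $Y$, so semistability and polystability of $(f^*E_*,\, f^*\phi)$ are defined. Applying Proposition \ref{prop0} once on $X$ and once on $Y$, the bundle $(E_*,\,\phi)$ is semistable (respectively, polystable) if and only if $E_*$ is semistable (respectively, polystable) as a parabolic vector bundle, and likewise $(f^*E_*,\,f^*\phi)$ is semistable (respectively, polystable) if and only if $f^*E_*$ is. Thus the lemma reduces to the assertion that $E_*$ is semistable (respectively, polystable) if and only if $f^*E_*$ is, which is the pullback compatibility of (semi)stability for parabolic vector bundles established in \cite{AB}.

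One direction is immediate from general principles: if $F_* \subsetneq E_*$ is any parabolic subbundle, then $f^*F_* \subsetneq f^*E_*$ is a parabolic subbundle and
$$\text{par-}\mu(f^*F_*) - \text{par-}\mu(f^*E_*) \,=\, (\deg f)\,\bigl(\text{par-}\mu(F_*)-\text{par-}\mu(E_*)\bigr),$$
since parabolic degree multiplies by $\deg f$ under pullback while rank is preserved. Hence a destabilizer of $E_*$ pulls back to a destabilizer of $f^*E_*$, and by contrapositive, if $f^*E_*$ is semistable then $E_*$ is semistable. The nontrivial direction — that the pullback of a semistable (respectively, polystable) parabolic bundle is again semistable (respectively, polystable) — is the main obstacle.

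I would handle the main obstacle via the Mehta--Seshadri correspondence, in parallel with the argument already used at the end of the proof of Proposition \ref{prop0}. After twisting by a suitable parabolic line bundle, one may assume $\text{par-}\mu(E_*) \,=\, 0$. A polystable parabolic bundle of parabolic degree zero on $(X,\,D)$ is given by a unitary representation $\rho\,:\,\pi_1(X\setminus D,\, x_0)\,\longrightarrow\,\text{U}(r)$, and the pullback $f^*E_*$ is given by the composition of $\rho$ with the homomorphism $\pi_1(Y\setminus B,\, y_0)\,\longrightarrow\, \pi_1(X\setminus D,\, x_0)$ induced by $f$; since unitarity is preserved by restriction to a subgroup, $f^*E_*$ is polystable. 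For a semistable $E_*$, its Jordan--H\"older graded pieces are polystable of the same slope, hence pull back to polystable parabolic bundles of the same slope, exhibiting $f^*E_*$ as an iterated extension of polystable bundles of a common slope, which is semistable. Combined with Proposition \ref{prop0}, this establishes both (1) and (2).
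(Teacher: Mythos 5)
Your argument is correct and takes essentially the same route as the paper: both reduce the statement to the underlying parabolic vector bundles via Proposition \ref{prop0} (applied on $X$ and on $Y$) and then invoke the pullback results of \cite{AB} (semistability of $E_*$ is equivalent to that of $f^*E_*$, and likewise for polystability). Your additional sketch of why those cited facts hold --- the degree-scaling computation for one direction and the Mehta--Seshadri unitary-representation description for the other --- goes beyond what the paper writes, but is consistent with how the results are proved in \cite{AB}.
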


\begin{proof}
The parabolic vector bundle $E_*$ on $X$ is semistable if and only if the parabolic vector bundle
$f^*E_*$ on $Y$ is semistable \cite[p.~19560, Lemma 3.5(2)]{AB}. This and Proposition \ref{prop0}(1)
combine together to give the first statement of the lemma.

The parabolic vector bundle $E_*$ on $X$ is polystable if and only if the parabolic vector bundle
$f^*E_*$ on $Y$ is polystable \cite[p.~19572, Theorem 5.6]{AB}. This and Proposition \ref{prop0}(2)
combine together to give the second statement.
\end{proof}

Now let $\theta$ be a Higgs field on the parabolic symplectic (respectively, parabolic orthogonal) vector bundle
$(E_*,\, \phi)$. The pullback $f^*\theta$ is a Higgs field on the pulled back parabolic vector bundle
$f^*E_*$ (see \cite[p.~19567, Proposition 5.1]{AB}).

\begin{lemma}\label{lem2}
The pullback $f^*\theta$ is a Higgs field on the parabolic symplectic (respectively, parabolic orthogonal)
vector bundle
$(f^*E_*,\, f^*\phi)$.
\end{lemma}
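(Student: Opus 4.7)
The plan is to verify the compatibility diagram \eqref{eq:compatibleHiggs2} for $(f^*E_*, f^*\phi, f^*\theta)$ by pulling back the corresponding diagram for $(E_*, \phi, \theta)$ and using the functoriality of the parabolic pullback operation.

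First I would recall that, by the hypothesis that $\theta$ is compatible with $\phi$, we have a commutative diagram \eqref{eq:compatibleHiggs2} of parabolic homomorphisms on $X$. Since parabolic pullback is functorial and, by \eqref{e8}, it commutes with direct sums, tensor products and duals, applying $f^*$ to this diagram yields a commutative diagram of parabolic homomorphisms on $Y$ involving $f^*E_*$, $f^*E_*^* = (f^*E_*)^*$, $f^*L_*$ and $f^*K_X(D)$. Under these identifications, $f^*\widehat\phi$ coincides with $\widehat{f^*\phi}$ (this is exactly what was established inside the proof of Proposition \ref{prop1}), and $f^*(\theta^*)$ coincides with $(f^*\theta)^*$, because transposition is defined via the evaluation pairing which is compatible with pullback.

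Second, I would post-compose the right-hand vertical arrow with the natural parabolic map $f^*K_X(D) \to K_Y(B)$ induced by the differential $df$. This is precisely the map used to define the pullback of a parabolic Higgs field in \cite[Proposition 5.1]{AB}, so after this composition the pulled-back diagram becomes the compatibility diagram \eqref{eq:compatibleHiggs2} for $f^*\theta$ and $f^*\phi$. This shows that $f^*\theta$ is compatible with $f^*\phi$, which by definition means it is a Higgs field on the parabolic symplectic (respectively, orthogonal) bundle $(f^*E_*, f^*\phi)$. The same argument works in both the symmetric and antisymmetric case since the symmetry/antisymmetry of $\phi$ is preserved by pullback (as in Proposition \ref{prop1}).

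As a sanity check, one can obtain the same conclusion via Proposition \ref{prop:parabolicHiggsdef}: compatibility of $\theta$ with a symplectic $\phi$ means $\theta \in H^0(X, \mathrm{Sym}^2(E_*)\otimes L_*^*\otimes K_X(D))$ under the isomorphism \eqref{e7}; pulling back and using that parabolic pullback commutes with $\mathrm{Sym}^2$, with duals, and with tensor products gives $f^*\theta \in H^0(Y, \mathrm{Sym}^2(f^*E_*)\otimes (f^*L_*)^*\otimes K_Y(B))$ after composition with $f^*K_X(D)\to K_Y(B)$, and applying Proposition \ref{prop:parabolicHiggsdef} on $Y$ yields the compatibility of $f^*\theta$ with $f^*\phi$. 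The orthogonal case is identical with $\mathrm{Sym}^2$ replaced by $\bigwedge^2$.

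The only non-routine point is the bookkeeping of the canonical bundles: one must check that the pullback of $\theta$, viewed initially as landing in $E\otimes f^*K_X(D)$, lands in $f^*E\otimes K_Y(B)$ after composing with the map induced by $df$, and that this composition is compatible with the identifications $f^*\widehat\phi = \widehat{f^*\phi}$ and $f^*(\theta^*) = (f^*\theta)^*$. All of these are built into the definition of parabolic pullback of Higgs fields in \cite{AB}, so once they are invoked the argument reduces to pulling back a commutative square.
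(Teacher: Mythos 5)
Your proposal is correct, and in fact it contains two arguments: your primary route and your ``sanity check'' are both valid, and the sanity check is essentially the paper's own proof. The paper argues exclusively through Proposition \ref{prop:parabolicHiggsdef}: compatibility of $\theta$ with $\phi$ is encoded as $\theta$ being a section of $\mathrm{Sym}^2(E_*)\otimes L_*^*\otimes K_X(D)$ (respectively $\bigwedge^2 E_*\otimes L_*^*\otimes K_X(D)$), and since parabolic pullback commutes with $\mathrm{Sym}^2$, $\bigwedge^2$, duals and tensor products, and $f^*K_X(D)\subset K_Y(B)$, the section $f^*\theta$ lands in the corresponding sheaf on $Y$, whence compatibility on $Y$ by the same proposition. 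Your primary route instead pulls back the commutative square \eqref{eq:compatibleHiggs2}, using $f^*\widehat\phi=\widehat{f^*\phi}$ (from Proposition \ref{prop1}) and $f^*(\theta^*)=(f^*\theta)^*$, and then composes with $f^*K_X(D)\to K_Y(B)$. This is a perfectly good alternative: it stays closer to the definition of compatibility and makes explicit which functoriality statements are being used, at the cost of having to verify the identification $f^*(\theta^*)=(f^*\theta)^*$, which the paper's route sidesteps entirely by packaging the symmetry condition into the $\mathrm{Sym}^2$ versus $\bigwedge^2$ dichotomy. Either argument suffices; the paper's is marginally shorter because Proposition \ref{prop:parabolicHiggsdef} was set up precisely to make this lemma a one-line consequence of the compatibility of pullback with the parabolic tensor operations.
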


\begin{proof}
Since $\theta$ is a Higgs field on the parabolic symplectic (respectively, parabolic orthogonal) vector bundle
$(E_*,\, \phi)$, by Proposition \ref{prop:parabolicHiggsdef} we have
$$
\theta\ \in\ H^0(X,\ \text{Sym}^2(E_*)\otimes L_*^* \otimes K_X(D))
$$
(respectively, $\theta\, \in\, H^0(X,\, (\bigwedge^2 E_*)\otimes L_*^* \otimes K_X(D))$).

Note that
$$
f^*(K_X\otimes {\mathcal O}_X(D)) \ \subset\ K_Y\otimes {\mathcal O}_Y(B) \ =: \ K_Y(B),
$$
where $B$ is the divisor on $Y$ defined in \eqref{a7}. Also,
$$
f^*(\text{Sym}^2(E_*))\,=\, \text{Sym}^2(f^*E_*) \ \ \text{ and }\ \ f^*(\bigwedge\nolimits^2 E_*)\,=\,
\bigwedge\nolimits^2(f^*E_*)
$$
(see \eqref{e8}). Therefore, we have
$$
f^*\theta\ \in\ H^0(Y,\ \text{Sym}^2(f^* E_*)\otimes f^*L_*^* \otimes K_Y(B))
$$
(respectively, $f^*\theta\, \in\, H^0(Y,\, (\bigwedge^2 f^*E_*)\otimes f^* L_*^* \otimes K_Y(B))$). In view
of Proposition \ref{prop:parabolicHiggsdef}, from this it follows immediately that $f^*\theta$ is a Higgs
field on the parabolic symplectic (respectively, parabolic orthogonal) vector bundle $(f^*E_*,\, f^*\phi)$.
\end{proof}

\begin{lemma}\label{lem7}\mbox{}
\begin{enumerate}
\item A parabolic symplectic (respectively, parabolic orthogonal) Higgs bundle $(E_*,\, \phi, \theta)$ on $X$
is semistable if and only if the pulled back parabolic symplectic (respectively, parabolic orthogonal) Higgs
bundle $(f^*E_*,\, f^*\phi, \,f^*\theta)$ on $Y$ is semistable.

\item If a parabolic symplectic (respectively, parabolic orthogonal) Higgs bundle $(E_*,\, \phi, \theta)$ on $X$
is polystable, then the parabolic symplectic (respectively, parabolic orthogonal) Higgs bundle
$(f^*E_*,\, f^*\phi, \,f^*\theta)$ on $Y$ is polystable.
\end{enumerate}
\end{lemma}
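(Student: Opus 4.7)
The plan is to mirror the two-step strategy already used in Lemma \ref{lem6}, only now with Proposition \ref{prop-1} in place of Proposition \ref{prop0} and invoking the parabolic Higgs-bundle versions of the pullback results from \cite{AB}. Since Lemma \ref{lem2} already tells us that the pulled back data $(f^*E_*,\, f^*\phi,\, f^*\theta)$ really is a parabolic symplectic (respectively, orthogonal) Higgs bundle, there is nothing to set up: the question is purely about matching (semi/poly)stability on both sides of the correspondence $E_* \leftrightarrow f^*E_*$.

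For statement (1), I would first apply Proposition \ref{prop-1}(1) on $X$ to rephrase the semistability of $(E_*,\,\phi,\,\theta)$ as the semistability of the underlying parabolic Higgs bundle $(E_*,\,\theta)$, and then apply the same Proposition \ref{prop-1}(1) on $Y$ to rephrase the semistability of $(f^*E_*,\,f^*\phi,\,f^*\theta)$ as the semistability of $(f^*E_*,\,f^*\theta)$. The two are then linked by the statement from \cite{AB} that pullback of a parabolic Higgs bundle by a nonconstant map preserves and reflects semistability (this is the Higgs-bundle analogue of the cited result \cite[p.~19560, Lemma 3.5(2)]{AB} and is proved in the same section of \cite{AB} that establishes Lemma \ref{lem2}). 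Concatenating the three equivalences yields statement (1).

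For statement (2), the same scheme works but with polystability, and only the forward direction is claimed. I would use Proposition \ref{prop-1}(2) on $X$ to reduce polystability of $(E_*,\,\phi,\,\theta)$ to polystability of the parabolic Higgs bundle $(E_*,\,\theta)$. By the pullback result for polystable parabolic Higgs bundles in \cite{AB} (the Higgs-bundle analogue of \cite[p.~19572, Theorem 5.6]{AB}), the pulled back parabolic Higgs bundle $(f^*E_*,\,f^*\theta)$ is polystable. Applying Proposition \ref{prop-1}(2) on $Y$ then gives that $(f^*E_*,\,f^*\phi,\,f^*\theta)$ is polystable. The asymmetry with Lemma \ref{lem6}(2) — namely, that only one direction is stated — reflects that the converse would require a descent-of-polystability statement for parabolic Higgs bundles under $f$, which is not invoked here.

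The main obstacle I expect is simply bookkeeping: one must be sure that the precise pullback results needed for parabolic Higgs bundles (preservation and reflection of semistability for (1); preservation of polystability for (2)) are indeed available from \cite{AB} in the form used, and that the compatibility of $f^*\theta$ with $f^*\phi$ supplied by Lemma \ref{lem2} is enough to feed into Proposition \ref{prop-1} on $Y$ without any extra hypothesis. Once those references are matched up, the argument is a short three-line chain of equivalences (respectively, implications), so no genuinely new computation is needed and the details can reasonably be omitted in the same spirit as the proof of Proposition \ref{prop-1}.
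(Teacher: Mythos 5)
Your proof is correct and follows exactly the paper's argument: combine Proposition \ref{prop-1} (on both $X$ and $Y$) with the semistability and polystability pullback results for parabolic Higgs bundles from \cite{AB} (the paper cites \cite[p.~19570, Lemma 5.4]{AB} for semistability and \cite[p.~19582, Theorem 7.3]{AB} for polystability, which are precisely the Higgs analogues you anticipated). Your remark on the one-directional nature of statement (2) also matches the paper's phrasing.
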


\begin{proof}
The parabolic Higgs bundle $(E_*,\, \theta)$ is semistable if and only if the parabolic
Higgs bundle $(f^*E_*,\, f^*\theta)$ is semistable \cite[p.~19570, Lemma 5.4]{AB}. This and Proposition \ref{prop-1}(1)
combine together to give the first statement.

If the parabolic Higgs bundle $(E_*,\, \theta)$ is polystable then the parabolic
Higgs bundle $(f^*E_*,\, f^*\theta)$ is polystable; this follows immediately from
\cite[p.~19582, Theorem 7.3]{AB}. This fact and Proposition \ref{prop-1}(2)
combine together to give the second statement.
\end{proof}

\subsection{Pullback of parabolic connections}

Let $E_*$ be a parabolic vector bundle on $X$.
Let $\nabla$ be a quasiparabolic connection on a parabolic vector bundle $E_*$ on $X$. 
Then $f^*\nabla$ is a quasiparabolic connection on the pulled back parabolic vector bundle $f^*E_*$
(see \cite[p.~19572]{AB}).

\begin{lemma}\label{lem3}
Let $\phi$ be an $L_*$--valued parabolic symplectic (respectively, parabolic orthogonal) structure
on $E_*$. Let $\nabla$ be a quasiparabolic connection on the parabolic symplectic (respectively, parabolic
orthogonal) vector bundle $(E_*,\, \phi)$. Then the pulled back connection $f^*\nabla$ is a quasiparabolic
connection on the pulled back $f^*L_*$--valued parabolic symplectic (respectively, parabolic orthogonal)
vector bundle $(f^*E_*,\, f^*\phi)$ on $Y$.
\end{lemma}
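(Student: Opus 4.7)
The plan is to reduce the lemma to a functoriality statement: by Proposition \ref{prop1}, $f^*\phi$ is an $f^*L_*$-valued parabolic symplectic (respectively, orthogonal) structure on $f^*E_*$, and by \cite[p.~19572]{AB} the pullback $f^*\nabla$ is a quasiparabolic connection on $f^*E_*$, so the only remaining content of the lemma is the compatibility of $f^*\nabla$ with $f^*\phi$. That is, one needs to verify the analogue of diagram \eqref{eq:compconn2} on $Y$.

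To carry this out, I would apply the pullback functor $f^*$ directly to the commutative diagram \eqref{eq:compconn2}. Because parabolic pullback commutes with direct sum, tensor product and dual (see \eqref{e8}), sends parabolic homomorphisms to parabolic homomorphisms, and sends quasiparabolic connections to quasiparabolic connections, the image of \eqref{eq:compconn2} is again a commutative diagram on $Y$. Its maps are $f^*\widehat{\phi} = \widehat{f^*\phi}$ (identified in the proof of Proposition \ref{prop1}) together with the pullbacks of $\nabla$, $\nabla^*$ and $D_L$. Pullback commutes with the formation of dual connections, which follows immediately from the defining identity $\langle \nabla u,\, v^*\rangle + \langle u,\, \nabla^*v^*\rangle = d\langle u,\, v^*\rangle$ by pulling back local sections. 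Hence the resulting diagram coincides with the compatibility diagram for $f^*\nabla$ and $f^*\phi$, provided one identifies $f^*D_L$ with the unique unitary logarithmic connection $D_{f^*L_*}$ on $f^*L_*$.

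The main obstacle is therefore this last identification $f^*D_L = D_{f^*L_*}$. Parabolic degree is multiplicative under pullback, so $\mathrm{par\text{-}deg}(f^*L_*) = \deg(f)\cdot\mathrm{par\text{-}deg}(L_*) = 0$, guaranteeing existence and uniqueness of a unitary logarithmic connection on $f^*L_*$. The cleanest justification for the equality is via the Mehta--Seshadri correspondence: $L_*$ corresponds to a unitary character of $\pi_1(X\setminus D)$, its parabolic pullback $f^*L_*$ corresponds to the composition with $f_*\,:\, \pi_1(Y\setminus B)\to \pi_1(X\setminus D)$, and this composition is again unitary, so uniqueness forces $f^*D_L = D_{f^*L_*}$. (As a backup one can verify directly that the residue $e_{i,j}\beta_i$ of $f^*D_L$ at $y_{i,j}$ matches the parabolic weight of $f^*L_*$ at $y_{i,j}$ prescribed by the construction of the parabolic pullback.) Assembling everything, $f^*\nabla$ is compatible with $f^*\phi$ via $D_{f^*L_*}$, yielding the desired quasiparabolic connection on $(f^*E_*,\, f^*\phi)$.
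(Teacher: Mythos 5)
Your proof is correct, but it proceeds differently from the paper's. You apply the pullback functor globally to the commutative diagram \eqref{eq:compconn2}, relying on the compatibility of parabolic pullback with tensor products, duals, homomorphisms and quasiparabolic connections; the paper instead restricts everything to the dense open subset $Y\setminus B$, where the parabolic pullback reduces to the classical pullback of vector bundles, observes that the classical pullback of a connection preserving a (possibly $L$-twisted) bilinear form again preserves the pulled-back form, and then extends the compatibility identity from $Y\setminus B$ to all of $Y$ by holomorphy (the same density principle used in Lemma \ref{lemma:transposition}). The two arguments carry essentially the same content: the functoriality you invoke --- that $f^*$ sends a flat homomorphism of quasiparabolic connections to a flat homomorphism --- is exactly what the paper's restriction-and-extension argument verifies in this instance, so your route is cleaner provided you cite or prove that functoriality (it is available in \cite{AB}), while the paper's route is more self-contained. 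A genuine merit of your write-up is that you justify the identification $f^*D_L = D_{f^*L_*}$ (via unitarity of the pulled-back character of $\pi_1$ under $f_*$ and uniqueness), whereas the paper simply asserts it. One small imprecision in your parenthetical backup check: the residue $e_{i,j}\beta_i$ of the naive pullback of $D_L$ need not lie in $[0,1)$; the underlying line bundle of the parabolic pullback $f^*L_*$ is $f^*L$ twisted by $\sum \lfloor e_{i,j}\beta_i\rfloor y_{i,j}$, and it is the residue on this twisted bundle, namely the fractional part $e_{i,j}\beta_i - \lfloor e_{i,j}\beta_i\rfloor$, that must (and does) match the parabolic weight of $f^*L_*$ at $y_{i,j}$. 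This does not affect your main argument.
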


\begin{proof}
As before, $D_{L}\,:\, L\,\longrightarrow\, L\otimes K_X(D)$ is the
unique unitary logarithmic connection on $L$ such that ${\rm Res}(D_L,\,x_i)\,=\,\beta_i$
for all $1\, \leq\, i\, \leq\, \ell$. Notice that the pullback connection $f^*D_L \, : \, f^*L_*
\longrightarrow f^*L_* \otimes K_Y(B)$ is the unique unitary parabolic connection on $f^*L_*$.
Thus, $D_{f^*L}\,=\,f^*D_L$, where $D_{f^*L}$ is the unique unitary parabolic connection on $f^*L_*$.
The restriction $(f^*E_*)\big\vert_{Y\setminus B}$ coincides with the usual pullback
$f^*(E\big\vert_{X\setminus D})$, where $E$ is the vector bundle on $X$ underlying the
parabolic vector bundle $E_*$. The parabolic symplectic (respectively, parabolic orthogonal)
structure $\phi\big\vert_{X\setminus D}$ is an usual symplectic (respectively, orthogonal)
structure on the vector bundle $E\big\vert_{X\setminus D}$. The restriction
$(f^*\phi)\big\vert_{Y\setminus B}$ is the usual pullback of $\phi\big\vert_{X\setminus D}$.

The connection $\nabla\big\vert_{X\setminus D}$ preserves the usual symplectic (respectively,
orthogonal) form $\phi\big\vert_{X\setminus D}$ on $E\big\vert_{X\setminus D}$. The pullback
$(f^*\nabla)\big\vert_{Y\setminus B}$
is the usual pullback of $\nabla\big\vert_{X\setminus D}$. We note that
$(f^*\nabla)\big\vert_{Y\setminus B}$ preserves $(f^*\phi)\big\vert_{Y\setminus B}$ because
$\nabla\big\vert_{X\setminus D}$ preserves the symplectic (respectively,
orthogonal) form $\phi\big\vert_{X\setminus D}$. 
Since $f^*\nabla$ is a quasiparabolic connection on the pulled back parabolic vector bundle $E_*$
(see \cite[p.~19572]{AB}), and
$(f^*\nabla)\big\vert_{Y\setminus B}$ preserves $(f^*\phi)\big\vert_{Y\setminus B}$, it follows
that $f^*\nabla$ is a quasiparabolic connection on the parabolic
symplectic (respectively, parabolic orthogonal) vector bundle $(f^*E_*,\, f^*\phi)$.
\end{proof}

\begin{lemma}\label{lem7-con}\mbox{}
\begin{enumerate}
\item A parabolic symplectic (respectively, parabolic orthogonal) connection $(E_*,\, \phi, \nabla)$ on $X$
is semistable if and only if the pulled back parabolic symplectic (respectively, parabolic orthogonal) connection
$(f^*E_*,\, f^*\phi, \,f^*\nabla)$ on $Y$ is semistable.

\item If a parabolic symplectic (respectively, parabolic orthogonal) connection $(E_*,\, \phi, \nabla)$ on $X$
is polystable, then the parabolic symplectic (respectively, parabolic orthogonal) connection
$(f^*E_*,\, f^*\phi, \,f^*\nabla)$ on $Y$ is polystable.
\end{enumerate}
\end{lemma}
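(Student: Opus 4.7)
The plan is to mirror the proof of Lemma~\ref{lem7} for the Higgs case, with the quasiparabolic connection playing the role of the Higgs field. In both parts the strategy is: (a) use Proposition~\ref{prop-1-con} to swap between (semi/poly)stability of $(E_*,\,\phi,\,\nabla)$ and that of the underlying parabolic connection $(E_*,\,\nabla)$, (b) transport the latter across $f$ using the pullback preservation results for parabolic connections from \cite{AB}, and (c) reverse the swap on $Y$.

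For part (1), I would first apply Proposition~\ref{prop-1-con}(1) on $X$ to rewrite semistability of $(E_*,\,\phi,\,\nabla)$ as semistability of the underlying quasiparabolic connection $(E_*,\,\nabla)$. Next, the pullback result for parabolic connections from \cite[p.~19572]{AB} (the analog, for quasiparabolic connections, of \cite[p.~19570, Lemma~5.4]{AB} used in Lemma~\ref{lem7}(1)) gives the equivalence with semistability of $(f^*E_*,\,f^*\nabla)$ on $Y$. A final application of Proposition~\ref{prop-1-con}(1) on $Y$ then concludes. Lemma~\ref{lem3} ensures that $f^*\nabla$ is actually a quasiparabolic connection on $(f^*E_*,\,f^*\phi)$, so the statement on $Y$ makes sense.

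For part (2), assume $(E_*,\,\phi,\,\nabla)$ is polystable on $X$. Using the polystability definition for parabolic symplectic/orthogonal connections (adapted from the Higgs definition stated before Proposition~\ref{prop-1}, with $\nabla_W\oplus(-\nabla_W^{*} + D_L)$ playing the role of $\theta_W\oplus -\theta_W^{*}$), decompose $(E_*,\,\phi,\,\nabla)$ as a direct sum of stable parabolic symplectic/orthogonal connection summands and hyperbolic summands $W_{i,*}\oplus(W_{i,*}^{*}\otimes L_*)$ built from a polystable parabolic connection $(W_{i,*},\,\nabla_{W_i})$ of parabolic degree zero. Since pullback commutes with direct sums and with the hyperbolic construction, by \eqref{e8} and Lemma~\ref{lem3}, it suffices to treat each summand. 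For a hyperbolic summand, the pullback is again of hyperbolic form with $(f^{*}W_{i,*},\,f^{*}\nabla_{W_i})$, and polystability of the underlying parabolic connection is preserved by $f^{*}$. For a stable symplectic/orthogonal summand, the same preservation applied to its underlying parabolic connection, combined with Proposition~\ref{prop-1-con}(3), produces a polystable parabolic symplectic/orthogonal connection on $Y$. The main obstacle is the availability of the polystability-under-pullback statement for \emph{parabolic connections}: this is stated in \cite{AB} for parabolic vector bundles (Theorem~5.6) and for parabolic Higgs bundles (Theorem~7.3), but not labeled explicitly for parabolic connections. I expect it to go through by the same socle-filtration and representation-theoretic argument used in those references, after which part (2) reduces to the routine bookkeeping carried out above.
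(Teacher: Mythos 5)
Your proposal is correct and matches the paper's proof: part (1) is argued exactly as you do (Proposition~\ref{prop-1-con} combined with the pullback semistability result for quasiparabolic connections from \cite{AB}, applied on both $X$ and $Y$), and for part (2) the paper simply declares the argument ``completely analogous'' to Lemma~\ref{lem7}(2) using Proposition~\ref{prop-1-con}(3) --- i.e.\ the same forget-$\phi$, pull back, reattach-$\phi$ reduction you carry out, just without your explicit summand-by-summand decomposition. Your remark that the polystability-under-pullback statement for parabolic connections is not explicitly labeled in \cite{AB} is a fair point, but it applies equally to the paper's own citation-level proof, so it is not a gap specific to your argument.
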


\begin{proof}
The proof of the first statement is completely analogous to the proof of the first statement of
Lemma \ref{lem7}, using the second statement of Proposition \ref{prop-1-con}.

The proof of the second statement is completely analogous to the proof of the second statement of
Lemma \ref{lem7}, using the third statement of Proposition \ref{prop-1-con}.
\end{proof}

\section{Direct image of parabolic bundles with symplectic and orthogonal structures}
\label{section:pushforward}

\subsection{Direct image of parabolic Higgs bundles}

Let $Z$ be a compact connected Riemann surface and
\begin{equation}\label{e9}
\Phi\ :\ X\ \longrightarrow\ Z
\end{equation}
a nonconstant holomorphic map. Let $R\, \subset\, X$
be the ramification locus of $\Phi$. To clarify, we do not assume that $R$ and $D$ are disjoint.
For any point $x\,\in\, X$, let $m_x\, \geq\, 1$ be the multiplicity of $\Phi$ at $x$,
so $m_x\, \geq\, 2$ if and only if $x\, \in\, R$. Define the finite subset
\begin{equation}\label{e10}
\Delta\ =\ \Phi (R\cup D) \ \subset\ Z.
\end{equation}
The divisor $\sum_{\delta\in \Delta} \delta$ on $Z$ will also be denoted by $\Delta$.

For any parabolic vector bundle $E_*$ on $X$ with parabolic structure on $D$, the direct image
$\Phi_*E\, \longrightarrow\, Z$, where $E$ is the underlying vector bundle for $E_*$, has a
natural parabolic structure over the divisor $\Delta$ in \eqref{e10} \cite[Section~4]{AB}
(see also \cite[p.~19565, Lemma 4.1]{AB}). The vector bundle $\Phi_*E$ equipped with this
parabolic structure is denoted by $\Phi_*E_*$. From the construction of the parabolic vector
bundle $\Phi_*E_*$ it follows that
\begin{equation}\label{e11}
\Phi_*(E_*\oplus F_*) \ = \ (\Phi_* E_*)\oplus (\Phi_*F_*)\ \ \, \text{ and }\ \ \, 
\Phi_*(E^*_*)\ =\ (\Phi_*E_*)^*.
\end{equation}

Let us assume that $L_*\,\cong \,\Phi^* L'_*$ is the pullback of some parabolic line bundle $L'_*$ on $Z$
of parabolic degree zero. This is the case, for instance, if $L_*\,=\,\mathcal{O}_X\,=\,\Phi^*\mathcal{O}_Z$.
Let $\phi$ be an $L_*$--valued parabolic symplectic (respectively, parabolic orthogonal) structure on the parabolic
vector bundle $E_*$. Since $\Phi_*(E^*_*)\ =\ (\Phi_*E_*)^*$ (see \eqref{e11}), using the projection formula, the
isomorphism
$\widehat{\phi}\,:\, E_*\, \longrightarrow\, E^*_* \otimes L_*$ in \eqref{e5} induces an isomorphism
\begin{equation}\label{e12}
\Phi_*\widehat{\phi}\ :\ \Phi_* E_*\ \longrightarrow\ \Phi_*(E^*_*) \otimes L'_*\ =\ (\Phi_*E_*)^* \otimes L'_*.
\end{equation}

\begin{lemma}\label{lem4}
For the $L_*$--valued parabolic symplectic (respectively, parabolic orthogonal) structure on $\phi$ on $E_*$, the
homomorphism $\Phi_*\widehat{\phi}$ in \eqref{e12} gives an $L'_*$--valued parabolic symplectic
(respectively, parabolic orthogonal) structure on the parabolic vector bundle $\Phi_*E_*$.
\end{lemma}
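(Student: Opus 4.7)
The plan is to interpret $\Phi_*\widehat{\phi}$ from \eqref{e12} by adjunction as a bilinear pairing $\phi'\,:\,(\Phi_*E_*)\otimes(\Phi_*E_*)\,\longrightarrow\, L'_*$, and to verify the two conditions of Definition \ref{def1} for $\phi'$. The condition that the adjoint of $\phi'$ be an isomorphism of parabolic vector bundles is automatic: the adjoint is $\Phi_*\widehat\phi$ itself, constructed in \eqref{e12} by applying parabolic direct image to the isomorphism $\widehat\phi$ together with the projection-formula identification $\Phi_*(E_*^*\otimes \Phi^*L'_*)\,\cong\,\Phi_*(E_*^*)\otimes L'_*$ and the duality $\Phi_*(E_*^*)\,=\,(\Phi_*E_*)^*$ from \eqref{e11}, each of which is an isomorphism. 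So the entire task reduces to showing that $\phi'$ is symmetric (respectively, antisymmetric) when $\phi$ is.

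By the argument used in Lemma \ref{lemma:transposition}, it is enough to verify the symmetry (respectively, antisymmetry) of $\phi'$ over the dense open subset $Z\setminus \Delta$: the difference $\phi'\circ \tau \mp \phi'$ is a global section of $\text{Hom}((\Phi_*E_*)\otimes (\Phi_*E_*),\, L'_*)$, and its vanishing on the dense open subset $Z\setminus \Delta$ forces its vanishing on $Z$. Over $Z\setminus \Delta$ the map $\Phi$ is unramified, so on a sufficiently small open $U\,\subset\, Z\setminus \Delta$ we can trivialize $\Phi$ as $\Phi^{-1}(U)\,=\,\bigsqcup_{k=1}^d U_k$ with each $\Phi\big\vert_{U_k}\,:\,U_k\,\longrightarrow\, U$ a biholomorphism. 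Under the ensuing identifications $(\Phi_*E)\big\vert_U\,\cong\,\bigoplus_{k=1}^d E\big\vert_{U_k}$, and $L\big\vert_{U_k}\,=\,(\Phi\big\vert_{U_k})^*L'\big\vert_U$ matches $L'\big\vert_U$ canonically. Tracing the adjoint of $\Phi_*\widehat{\phi}$ through these local trivializations and the projection formula, the restriction $\phi'\big\vert_U$ turns out to send $((s_k)_k,\,(t_k)_k)$ to the sheetwise sum $\sum_{k=1}^d \phi\big\vert_{U_k}(s_k\otimes t_k)$.

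Since each summand is symmetric (respectively, antisymmetric) in $(s_k,\,t_k)$ because $\phi$ is, the sum is so, whence $\phi'\big\vert_U$ has the required symmetry. Covering $Z\setminus \Delta$ by such opens yields the same conclusion on $Z\setminus \Delta$ and thus on $Z$; combined with the isomorphism property recorded in the first paragraph, this verifies Definition \ref{def1}, so $\phi'$ is an $L'_*$-valued parabolic orthogonal (respectively, symplectic) structure on $\Phi_*E_*$. The main point requiring care is the étale-locus computation giving the ``sum over sheets'' formula for $\phi'$: this is projection-formula and adjunction bookkeeping rather than a conceptual obstacle, but the canonical identification $L\big\vert_{U_k}\,\cong\,L'\big\vert_U$ arising from $L_*\,\cong\,\Phi^*L'_*$ must be tracked consistently across all sheets of the étale cover, and it is ultimately this identification that allows the sheetwise pairings, each landing in $L$, to be combined into a single $L'$-valued pairing.
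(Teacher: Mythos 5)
Your proposal is correct and follows essentially the same route as the paper: the isomorphism property of the adjoint is inherited directly from $\widehat\phi$ via the duality and projection-formula identifications, and the (anti)symmetry of the induced pairing is inherited from that of $\phi$. The paper simply asserts that $\Phi_*\widehat\phi$ is symmetric (respectively, anti-symmetric) whenever $\widehat\phi$ is, whereas you supply the justification by the sheetwise-sum computation over the \'etale locus $Z\setminus\Delta$ together with the density argument of Lemma \ref{lemma:transposition}; this is a worthwhile filling-in of detail rather than a different method.
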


\begin{proof}
Note that $\Phi_*\widehat{\phi}$ is anti-symmetric (respectively, symmetric) if $\widehat{\phi}$
is anti-symmetric (respectively, symmetric). The given condition that $\phi$ is a parabolic symplectic
(respectively, parabolic orthogonal) structure on $E_*$ implies that the homomorphism $\Phi_*\widehat{\phi}$
in \eqref{e12} is anti-symmetric (respectively, symmetric). Also, $\Phi_*\widehat{\phi}$ is
an isomorphism because $\widehat{\phi}$ is so. Therefore, it follows that $\Phi_*\widehat{\phi}$ gives a
parabolic symplectic (respectively, parabolic orthogonal) structure on $\Phi_*E_*$.
\end{proof}

This parabolic symplectic (respectively, parabolic orthogonal) structure on $\Phi_*E_*$
obtained in Lemma \ref{lem4} will be denoted by $\Phi_*\phi$.

\begin{lemma}\label{lem8}\mbox{}
\begin{enumerate}
\item A parabolic symplectic (respectively, parabolic orthogonal) vector bundle
$(E_*,\, \phi)$ is semistable if and only if the parabolic symplectic (respectively, parabolic orthogonal) vector bundle
$(\Phi_* E_*,\, \Phi_* \phi)$ is semistable.

\item A parabolic symplectic (respectively, parabolic orthogonal) vector bundle
$(E_*,\, \phi)$ is polystable if and only if the parabolic symplectic (respectively, parabolic orthogonal) vector bundle
$(\Phi_* E_*,\, \Phi_* \phi)$ is polystable.
\end{enumerate}
\end{lemma}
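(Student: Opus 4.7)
The plan is to mirror exactly the strategy used for pullbacks in Lemma \ref{lem6}, factoring the statement through Proposition \ref{prop0} and then appealing to the corresponding direct image results for underlying parabolic vector bundles proved in \cite{AB}.

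For statement (1), I would first apply Proposition \ref{prop0}(1) on the source side: the parabolic symplectic (respectively, orthogonal) vector bundle $(E_*,\,\phi)$ on $X$ is semistable if and only if the underlying parabolic vector bundle $E_*$ is semistable. Then I would apply Proposition \ref{prop0}(1) on the target side: the parabolic symplectic (respectively, orthogonal) vector bundle $(\Phi_*E_*,\,\Phi_*\phi)$ on $Z$ is semistable if and only if the underlying parabolic vector bundle $\Phi_*E_*$ is semistable. These two reductions convert the statement into: $E_*$ is semistable as a parabolic vector bundle if and only if $\Phi_*E_*$ is semistable as a parabolic vector bundle, which is the direct image analog of \cite[Lemma 3.5(2)]{AB} and is available from \cite{AB}.

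For statement (2), I would proceed identically, using Proposition \ref{prop0}(2) in place of Proposition \ref{prop0}(1) on both sides. After these two reductions, the statement becomes: $E_*$ is polystable as a parabolic vector bundle if and only if $\Phi_*E_*$ is polystable as a parabolic vector bundle, which is the direct image analog of \cite[Theorem 5.6]{AB} proved in \cite{AB}.

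The only point requiring care is that Proposition \ref{prop0} concerns an arbitrary $L'_*$-valued symplectic/orthogonal structure on the bundle on $Z$, so we must check that Proposition \ref{prop0} is applicable with the specific pairing $\Phi_*\phi$ produced in Lemma \ref{lem4}; but since that lemma verifies exactly that $\Phi_*\phi$ is an $L'_*$-valued parabolic symplectic (respectively, orthogonal) structure on $\Phi_*E_*$ in the sense of Definition \ref{def1}, Proposition \ref{prop0} applies verbatim. The main potential obstacle is ensuring that the direct image results on semistability and polystability of underlying parabolic vector bundles from \cite{AB} apply in both directions (pushforward preserves and reflects (poly)semistability); once those citations are in place, the argument is purely formal.
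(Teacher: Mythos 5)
Your proposal is correct and follows exactly the paper's own argument: reduce both sides to the underlying parabolic vector bundles via Proposition \ref{prop0}, then invoke the direct image (semi/poly)stability equivalences from \cite{AB} (the precise references being \cite[Proposition 4.3]{AB} for semistability and \cite[Proposition 5.7]{AB}, \cite[Theorem 7.5]{AB} for polystability). The remark that Lemma \ref{lem4} guarantees $\Phi_*\phi$ is a genuine $L'_*$-valued structure, so that Proposition \ref{prop0} applies on the target side, is the right point of care.
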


\begin{proof}
The parabolic vector bundle $E_*$ on $X$ is semistable if and only if the parabolic vector bundle $\Phi_* E_*$ on $Z$
is semistable \cite[p.~19567, Proposition 4.3]{AB}. This and Proposition \ref{prop0}(1)
combine together to give the first statement of the lemma.

The parabolic vector bundle $E_*$ on $X$ is polystable if and only if the parabolic vector bundle $\Phi_* E_*$ on $Z$
is polystable (see \cite[p.~19574, Proposition 5.7]{AB} and \cite[p.~19587, Theorem 7.5]{AB}). This and Proposition \ref{prop0}(2)
combine together to give the second statement.
\end{proof}

Now let $\theta$ be a Higgs field on the parabolic vector bundle $E_*$. Then
$\theta$ induces a Higgs field on the parabolic vector bundle $\Phi_* E_*$ (see \cite[p.~19576, (6.6)]{AB});
this induced Higgs field on $E_*$ is denoted by $\Phi_* \theta$.

\begin{lemma}\label{lem5}
Assume that $L_*\,=\,\Phi^*L'_*$. Let $\phi$ be an $L_*$--valued parabolic symplectic (respectively, parabolic orthogonal) structure on the parabolic vector
bundle $E_*$. If $\theta$ is a Higgs field on the parabolic symplectic (respectively, parabolic orthogonal) vector bundle
$(E_*,\, \phi)$, then it $\Phi_* \theta$ is an $L'_*$-valued Higgs field on the parabolic
symplectic (respectively, parabolic orthogonal) vector bundle $(\Phi_* E_*,\, \Phi_*\phi)$.
\end{lemma}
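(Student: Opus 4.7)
The plan is to verify that $\Phi_*\theta$ satisfies the compatibility condition \eqref{eq:compatibleHiggs1} with respect to the parabolic symplectic (respectively, parabolic orthogonal) pairing $\Phi_*\phi$ constructed in Lemma \ref{lem4}. That $\Phi_*\theta$ is already a parabolic Higgs field on $\Phi_*E_*$ is \cite[p.~19576, (6.6)]{AB}, so only the compatibility with $\Phi_*\phi$ remains to be established.

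The main reduction is as follows. The compatibility condition \eqref{eq:compatibleHiggs1} for $\Phi_*\theta$ and $\Phi_*\phi$ is the vanishing of a specific global section of the sheaf $\mathrm{Hom}(\Phi_*E_*\otimes\Phi_*E_*,\, L'_*\otimes K_Z(\Delta))$. Since the underlying sheaf is locally free and hence torsion free, the section vanishes on all of $Z$ if and only if it vanishes over any open dense subset, exactly as in Lemma \ref{lemma:transposition}. I would therefore restrict attention to the open subset $U\,:=\,Z\setminus\Delta$, over which $\Phi^{-1}(U)\subseteq X\setminus(D\cup R)$ and $\Phi|_{\Phi^{-1}(U)}\colon \Phi^{-1}(U)\to U$ is finite étale with no parabolic structure in play.

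On this étale locus the problem becomes essentially classical. Working in a neighborhood of a point $z\in U$ over which $\Phi$ is a trivial cover with fiber $\{x_1,\dots,x_n\}$, the direct images $\Phi_*E$, $\Phi_*\phi$ and $\Phi_*\theta$ decompose as direct sums over the sheets. On each sheet the compatibility of $\theta$ with $\phi$ is imported verbatim, and summing over sheets yields the compatibility of $\Phi_*\theta$ with $\Phi_*\phi$ on $U$. Combined with the torsion-free extension argument above, this gives the compatibility on all of $Z$.

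The main technical obstacle is bookkeeping: one must carefully identify $\Phi_*\phi$, $\Phi_*\theta$ and, crucially, $\Phi_*(\theta^*\otimes\mathrm{Id}_L)$ with their classical counterparts on the étale locus, using the projection formula $\Phi_*(E_*\otimes\Phi^*L'_*)\,=\,(\Phi_*E_*)\otimes L'_*$ together with the identification $\Phi_*(E_*^*)\,=\,(\Phi_*E_*)^*$ from \eqref{e11}. Once it is verified that these identifications intertwine the operation $\theta\mapsto\theta^*$ with the corresponding operation on $\Phi_*\theta$, applying $\Phi_*$ to the commutative square \eqref{eq:compatibleHiggs2} for $(E_*,\phi,\theta)$ produces the analogous commutative square for $(\Phi_*E_*,\Phi_*\phi,\Phi_*\theta)$, which is precisely the compatibility to be proved.
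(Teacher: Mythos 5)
Your proposal is correct, but it takes a genuinely different route from the paper. The paper does not verify the identity \eqref{eq:compatibleHiggs1} directly: it invokes Proposition \ref{prop:parabolicHiggsdef} to recast compatibility as the statement that $\theta$ is a global section of $\mathrm{Sym}^2(E_*)\otimes L_*^*\otimes K_X(D)$ (respectively, $\bigwedge^2 E_*\otimes L_*^*\otimes K_X(D)$), observes that $K_X(D)\subset\Phi^*(K_Z(\Delta))$, and concludes via the projection formula that $\Phi_*\theta$ lands in $H^0(Z,\,\mathrm{Sym}^2(\Phi_*E_*)\otimes(L'_*)^*\otimes K_Z(\Delta))$ (respectively, the $\bigwedge^2$ analogue), whence compatibility with $\Phi_*\phi$ by the same proposition. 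You instead check the compatibility identity itself, reducing by torsion-freeness to the \'etale locus $Z\setminus\Delta$, where $\Phi_*E$, $\Phi_*\phi$ and $\Phi_*\theta$ split as sums over the sheets and the identity is inherited termwise. The paper's argument is shorter and stays global, reusing machinery already built in Section \ref{se3}; but it leaves implicit the identification of $\Phi_*$ applied to symmetric and exterior squares with the symmetric and exterior squares of $\Phi_*E_*$, which is itself most transparently verified exactly where you work, on the unramified locus. Your argument is more elementary and self-contained, avoids Proposition \ref{prop:parabolicHiggsdef} entirely, and makes explicit where the actual computation lives; the price is the bookkeeping you flag, chiefly that the identifications of \eqref{e11} and the projection formula intertwine $\theta\mapsto\theta^*$ with $\Phi_*\theta\mapsto(\Phi_*\theta)^*$, which does hold and is checked on the same dense open set. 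Both proofs are sound.
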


\begin{proof}
A Higgs field $\theta$ on the parabolic symplectic (respectively, parabolic orthogonal) vector
bundle $(E_*,\, \phi)$ is a section of $\text{Sym}^2(E_*) \otimes L^*_* \otimes K_X(D)$ (respectively,
$(\bigwedge ^2 E_*) \otimes L^*_* \otimes K_X(D)$). Note that $K_X\otimes{\mathcal O}_X(D)\, \subset\,
\Phi^*(K_Z\otimes {\mathcal O}_Z(\Delta))$. Therefore, $\Phi_* \theta$ gives an element of
$H^0(Z,\, \text{Sym}^2(\Phi_* E_*)\otimes (L'_*)^* \otimes K_Z\otimes{\mathcal O}_Z(\Delta))$
(respectively, $H^0(Z,\, \bigwedge^2(\Phi_* E_*)\otimes (L'_*)^* \otimes K_Z\otimes{\mathcal O}_Z(\Delta))$). From
this it follows that $\Phi_* \theta$ is a Higgs field on the $L'_*$--valued parabolic
symplectic (respectively, parabolic orthogonal) vector bundle $(\Phi_* E_*,\, \Phi_*\phi)$.
\end{proof}

\begin{lemma}\label{lem9}\mbox{}
\begin{enumerate}
\item A parabolic symplectic (respectively, parabolic orthogonal) Higgs bundle $(E_*,\, \phi,\, \theta)$ is
semistable if and only if the parabolic symplectic (respectively, parabolic orthogonal) Higgs bundle
$(\Phi_* E_*,\, \Phi_* \phi,\,\Phi_*\theta)$ is semistable.

\item If a parabolic symplectic (respectively, parabolic orthogonal) Higgs bundle $(E_*,\, \phi,\, \theta)$ is
polystable then the parabolic symplectic (respectively, parabolic orthogonal) Higgs bundle
$(\Phi_* E_*,\, \Phi_* \phi,\,\Phi_*\theta)$ is polystable.
\end{enumerate}
\end{lemma}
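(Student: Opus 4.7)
The plan is to follow the same two-step scheme used in the proofs of Lemma \ref{lem7} and Lemma \ref{lem8}: first use Proposition \ref{prop-1} to strip the symplectic (respectively, orthogonal) structure and reduce to the underlying parabolic Higgs bundle, then invoke the corresponding direct-image result for ordinary parabolic Higgs bundles from \cite{AB}, and finally reapply Proposition \ref{prop-1} on the target curve $Z$.

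For part (1), I would begin by noting that, by Lemma \ref{lem4} and Lemma \ref{lem5}, the triple $(\Phi_*E_*,\,\Phi_*\phi,\,\Phi_*\theta)$ is indeed a parabolic symplectic (respectively, orthogonal) Higgs bundle on $Z$, so the statement is meaningful. By Proposition \ref{prop-1}(1), $(E_*,\,\phi,\,\theta)$ is semistable if and only if the underlying parabolic Higgs bundle $(E_*,\,\theta)$ is semistable. The preservation of semistability under direct image for parabolic Higgs bundles from \cite{AB} then makes this equivalent to semistability of $(\Phi_*E_*,\,\Phi_*\theta)$, which by Proposition \ref{prop-1}(1) applied on $Z$ is equivalent to semistability of $(\Phi_*E_*,\,\Phi_*\phi,\,\Phi_*\theta)$. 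Chaining the equivalences gives part (1).

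For part (2), the argument runs in parallel using Proposition \ref{prop-1}(2) and the corresponding preservation of polystability under direct image of parabolic Higgs bundles from \cite{AB}. If $(E_*,\,\phi,\,\theta)$ is polystable, then Proposition \ref{prop-1}(2) yields polystability of $(E_*,\,\theta)$, the direct-image statement then gives polystability of $(\Phi_*E_*,\,\Phi_*\theta)$, and a final application of Proposition \ref{prop-1}(2) on $Z$ produces polystability of $(\Phi_*E_*,\,\Phi_*\phi,\,\Phi_*\theta)$. Note that only an implication is claimed here, mirroring Lemma \ref{lem7}(2), because the direct-image preservation of polystability for plain parabolic Higgs bundles in \cite{AB} is itself one-directional.

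The only nontrivial input is the existence of the two direct-image preservation statements for parabolic Higgs bundles in \cite{AB}; the proofs of Lemma \ref{lem7} and Lemma \ref{lem8} show that results of exactly this flavour are already available there (compare the vector bundle citations \cite[Proposition 4.3]{AB}, \cite[Proposition 5.7]{AB}, \cite[Theorem 7.5]{AB} and the Higgs pullback citations \cite[Lemma 5.4]{AB}, \cite[Theorem 7.3]{AB}). Once those inputs are cited, the lemma is essentially formal: all the genuine compatibility of $\phi$ with $\theta$ after direct image has already been established in Lemma \ref{lem4} and Lemma \ref{lem5}, and Proposition \ref{prop-1} absorbs the symplectic/orthogonal datum at both ends.
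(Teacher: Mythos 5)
Your proposal is correct and follows essentially the same route as the paper: reduce to the underlying parabolic Higgs bundle via Proposition \ref{prop-1}, invoke the direct-image preservation of semistability (respectively, polystability) for parabolic Higgs bundles from \cite{AB} (the paper cites Proposition 6.2 and Theorem 7.5 there), and reapply Proposition \ref{prop-1} on $Z$. No gaps.
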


\begin{proof}
The parabolic Higgs bundle $(E_*,\, \theta)$ is semistable if and only if the parabolic
Higgs bundle $(\Phi_* E_*,\, \Phi_* \theta)$ is semistable \cite[p.~19577, Proposition 6.2]{AB}.
This and Proposition \ref{prop-1}(1) combine together to give the first statement of the lemma.

If the parabolic Higgs bundle $(E_*,\, \theta)$ is polystable then the parabolic
Higgs bundle $(\Phi_* E_*,\, \Phi_* \theta)$ is polystable \cite[p.~19587, Theorem 7.5]{AB}.
This and Proposition \ref{prop-1}(2) combine together to give the second statement.
\end{proof}

\subsection{Direct image of parabolic connections}

Let $\Phi\, :\, X\,\longrightarrow\, Z$ be the map in \eqref{e9}. As before, assume that $L_* \,=\,\Phi^*L'_*$ 
is the pullback of some parabolic line bundle $L'_*$ on $Z$ of parabolic degree zero. Let $D_{L'}$ be the unique 
unitary parabolic connection on $L'_*$. Then it is clear that $\Phi^*D_{L'}$ is the unique unitary parabolic 
connection $D_L$ on $L_*\,=\,\Phi^*D_{L'}$. Let $\nabla$ be a quasiparabolic connection on the parabolic vector 
bundle $E_*$ on $X$. Then $\nabla$ induces a quasiparabolic connection on the parabolic vector bundle $\Phi_* 
E_*$ (see \cite[Section 6.2]{AB}); this induced quasiparabolic connection on $\Phi_* E_*$ will be denoted by 
$\Phi_*\nabla$. Now let $\phi$ be a parabolic symplectic (respectively, parabolic orthogonal) structure on 
$E_*$. Assume that $\nabla$ is compatible with $\phi$. Then $\Phi_*\nabla$ is a quasiparabolic connection on the 
parabolic symplectic (respectively, parabolic orthogonal) vector bundle $(E_*,\, \phi)$.

\begin{lemma}\label{lem10}\mbox{}
\begin{enumerate}
\item A quasiparabolic symplectic (respectively, quasiparabolic orthogonal) connection
$$(E_*,\, \phi,\, \nabla)$$ is
semistable if and only if the quasiparabolic symplectic (respectively, quasiparabolic orthogonal) connection
$$(\Phi_* E_*,\, \Phi_* \phi,\,\Phi_*\nabla)$$ is semistable.

\item If a quasiparabolic symplectic (respectively, quasiparabolic orthogonal) connection
$$(E_*,\, \phi,\, \theta)$$ is
polystable then the quasiparabolic symplectic (respectively, quasiparabolic orthogonal) connection
$$(\Phi_* E_*,\, \Phi_* \phi,\,\Phi_*\theta)$$ is polystable.
\end{enumerate}
\end{lemma}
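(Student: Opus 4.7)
The plan is to follow exactly the same template used in Lemma \ref{lem9} for Higgs bundles, replacing the Higgs field $\theta$ by the quasiparabolic connection $\nabla$ and invoking Proposition \ref{prop-1-con} in place of Proposition \ref{prop-1}. The two ingredients I need are (a) the compatibility between $\Phi_*\nabla$ and $\Phi_*\phi$, which was already established in the paragraph preceding the lemma statement, and (b) the corresponding statements about direct images of ordinary quasiparabolic connections from \cite[Section~6.2]{AB}.

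For statement (1), I would first recall from \cite{AB} that the direct image operation preserves semistability of quasiparabolic connections in both directions: $(E_*,\,\nabla)$ is semistable if and only if $(\Phi_*E_*,\,\Phi_*\nabla)$ is semistable. Combining this with Proposition \ref{prop-1-con}(1), which tells us that a quasiparabolic symplectic (respectively, orthogonal) connection is semistable if and only if its underlying quasiparabolic connection is semistable, yields the claimed equivalence. More concretely, $(E_*,\,\phi,\,\nabla)$ is semistable $\iff$ $(E_*,\,\nabla)$ is semistable $\iff$ $(\Phi_*E_*,\,\Phi_*\nabla)$ is semistable $\iff$ $(\Phi_*E_*,\,\Phi_*\phi,\,\Phi_*\nabla)$ is semistable, where the first and last equivalences use Proposition \ref{prop-1-con}(1) and the middle one uses the cited result.

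For statement (2), the same strategy applies but only in one direction, as in the third part of Proposition \ref{prop-1-con}. I would use the fact from \cite{AB} that if $(E_*,\,\nabla)$ is polystable, then $(\Phi_*E_*,\,\Phi_*\nabla)$ is polystable (this is the analogue for connections of \cite[Theorem~7.5]{AB} for Higgs bundles). Then Proposition \ref{prop-1-con}(3) applied to $(\Phi_*E_*,\,\Phi_*\phi,\,\Phi_*\nabla)$ gives polystability of the symplectic/orthogonal connection on $Z$.

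The only genuinely delicate point, rather than a true obstacle, is checking that the reference in \cite[Section~6.2]{AB} really provides the semistability equivalence and polystability preservation for \emph{quasi}parabolic connections (not only for parabolic ones in the strict sense where residues are prescribed by the weights). If that formulation is missing, one can reduce to the parabolic case by the standard trick of incorporating the residue eigenvalues into the parabolic weights, so that a quasiparabolic connection becomes parabolic in a modified parabolic structure; semistability is insensitive to this modification since the parabolic degree of every subbundle shifts by the same slope. With that in hand, the proof reduces to citation plus Proposition \ref{prop-1-con}, and no further calculation is required.
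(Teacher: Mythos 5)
Your proposal is correct and follows essentially the same route as the paper: the paper's proof likewise reduces both statements to Proposition \ref{prop-1-con} combined with the direct-image results for quasiparabolic connections from \cite[Proposition~6.4]{AB} and \cite[Theorem~7.5]{AB}, exactly mirroring the argument of Lemma \ref{lem9}. Your extra caution about the quasiparabolic versus parabolic formulation in \cite{AB} is reasonable but not needed, as the cited results are stated for quasiparabolic (logarithmic) connections.
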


\begin{proof}
The proof is analogous to Lemma \ref{lem9} and it is a consequence of Proposition \ref{prop-1-con}
\cite[Proposition 6.4]{AB} and \cite[Theorem 7.5]{AB}.
\end{proof}

\section{Nonabelian Hodge correspondence}
\label{section:NAHC}

Finally, let us explore the compatibility between the previous construction and the Nonabelian Hodge Theory for 
noncompact curves \cite{Si}. In this section, given a curve $X$, let us denote by $NAHC_X$ the functor 
giving an equivalence of categories between the category of polystable quasiparabolic Higgs bundles of parabolic 
degree $0$ and the category of polystable quasiparabolic connections on $X$ of parabolic degree $0$ given by 
\cite[Main Theorem, p. 755]{Si}.

Let $L_*$ be a parabolic line bundle of parabolic degree $0$. Let $\{\beta_i\}$ be its parabolic weights. By 
\cite{Ka} and \cite{Si}, there exists a unique unitary logarithmic connection $D_{L}: L\longrightarrow L\otimes 
K_X(D)$ such that ${\rm Res}(D_L,x_i)=\beta_i$.

Then, the parabolic connection $(L_*,\,D_L)$ corresponds through $NAHC_X$ to the parabolic Higgs bundle $(L_*,\,
0)$.

\begin{theorem}
\label{thm:NAHC}
Let $(L_*,\,D_L)$ be a parabolic line bundle of parabolic degree $0$ with its associated unitary logarithmic
connection. Let $(E_*, \,\theta)$ be a parabolic Higgs bundle, and let $(E'_*, \,\nabla')$ be the flat
parabolic connection associated to $(E_*, \,\theta)$ through nonabelian Hodge Correspondence. Then, there exists
a one on one correspondence between the following two:
\begin{itemize}
\item Symplectic (respectively, orthogonal) structures $\phi\,:\,E_*\otimes E_* \,\longrightarrow\, L_*$
on $E_*$ compatible with $\theta$.

\item Symplectic (respectively, orthogonal) structures $\phi'\,:\, E'_* \otimes E'_* \,\longrightarrow\,
L_*$ which are compatible with $\nabla'$.
\end{itemize}
As a consequence, the Nonabelian Hodge Correspondence gives an equivalence between the categories of
polystable parabolic symplectic (respectively, orthogonal) Higgs bundles and polystable flat parabolic
symplectic (respectively, orthogonal) connections.
\end{theorem}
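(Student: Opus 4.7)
The approach is to recast compatibility as a morphism/isomorphism statement and then invoke the functoriality of NAHC under duals, tensor products, and the swap of factors. First, I would use the reformulations \eqref{eq:compatibleHiggs2} and \eqref{eq:compconn2}: giving a pairing $\phi : E_*\otimes E_* \longrightarrow L_*$ compatible with $\theta$ is equivalent to giving a morphism of parabolic Higgs bundles
$$\widehat\phi\, :\, (E_*,\,\theta)\,\longrightarrow\, (E_*^*\otimes L_*,\, \theta^*\otimes \mathrm{Id}_L),$$
and $\phi$ is an orthogonal/symplectic structure iff this morphism is an isomorphism. Analogously, giving $\phi'$ compatible with $\nabla'$ amounts to giving a morphism of parabolic flat connections
$$\widehat{\phi'}\, :\, (E'_*,\,\nabla')\,\longrightarrow\, (E'^{*}_*\otimes L_*,\, \nabla'^{*}\otimes \mathrm{Id}_L+ \mathrm{Id}_{E'^{*}_*}\otimes D_L),$$
which is an orthogonal/symplectic structure iff it is an isomorphism.

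Next, I would invoke the functoriality of $NAHC_X$ with respect to duality and tensor products. By the hypothesis, $(E_*,\theta)\leftrightarrow (E'_*,\nabla')$, and by construction $(L_*,0)\leftrightarrow (L_*,D_L)$. Since Simpson's correspondence is a tensor equivalence of categories preserving duals, one obtains
$$(E_*^*\otimes L_*,\, \theta^*\otimes \mathrm{Id}_L)\ \longleftrightarrow\ (E'^{*}_*\otimes L_*,\, \nabla'^{*}\otimes \mathrm{Id}_L+ \mathrm{Id}_{E'^{*}_*}\otimes D_L).$$
Because an equivalence of categories induces a bijection of morphisms, and isomorphisms go to isomorphisms, one obtains a bijective correspondence $\widehat\phi\leftrightarrow \widehat{\phi'}$ between the isomorphisms of the objects on the two sides. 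Undoing the adjunction, this gives the desired bijection between pairings $\phi$ and $\phi'$.

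It remains to verify that symmetry/antisymmetry is preserved. Here I would apply Lemma \ref{lemma:transposition}: $\phi$ is orthogonal (resp.\ symplectic) iff $\phi\circ\tau=\phi$ (resp.\ $\phi\circ\tau=-\phi$), where $\tau$ is the swap of tensor factors. The swap $\tau$ is a morphism of parabolic Higgs bundles $(E_*\otimes E_*,\,\theta\otimes \mathrm{Id}+\mathrm{Id}\otimes \theta)\to (E_*\otimes E_*,\,\theta\otimes \mathrm{Id}+\mathrm{Id}\otimes \theta)$, and by functoriality it corresponds under $NAHC_X$ precisely to the analogous swap $\tau'$ on $E'_*\otimes E'_*$, since $\tau$ is defined purely from the symmetric monoidal structure of tensor products. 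Therefore $\phi\circ\tau=\pm\phi$ if and only if $\phi'\circ\tau'=\pm\phi'$, so orthogonal pairings correspond to orthogonal pairings and symplectic to symplectic. The consequence for the categorical equivalence then follows by combining this with the fact that $NAHC_X$ preserves polystability and parabolic degree zero together with Propositions \ref{prop-1} and \ref{prop-1-con}, which identify the polystability of a parabolic symplectic/orthogonal Higgs bundle (resp.\ connection) with the polystability of the underlying parabolic Higgs bundle (resp.\ connection).

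The main obstacle is the verification that $NAHC_X$ is compatible with the monoidal structure in the parabolic setting, i.e., that it commutes with tensor products, duals, and the transposition endomorphism. This is a standard consequence of Simpson's construction (harmonic metrics on tensor products and duals of polystable Higgs bundles are the induced metrics), but in the parabolic setting one must ensure that parabolic tensor product and parabolic dual behave correctly, and that $(L_*,0)$ and $(L_*,D_L)$ correspond as needed, which was already noted in the discussion preceding the statement.
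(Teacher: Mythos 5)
Your proposal is correct and follows essentially the same route as the paper: reformulate compatibility via $\widehat\phi$ as an (iso)morphism of parabolic Higgs bundles (respectively, connections), invoke the compatibility of $NAHC_X$ with tensor products and duals together with the correspondence $(L_*,0)\leftrightarrow(L_*,D_L)$, handle symmetry versus antisymmetry through Lemma \ref{lemma:transposition} and the invariance of the transposition $\tau$ under the functor, and deduce the categorical equivalence from Propositions \ref{prop-1} and \ref{prop-1-con}. The only cosmetic difference is that the paper first views $\phi$ itself as a morphism $(E_*,\theta)\otimes(E_*,\theta)\to(L_*,0)$ before passing to $\widehat\phi$, which changes nothing of substance.
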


\begin{proof}
Since the Nonabelian Hodge Correspondence $NAHC_X$ is an equivalence of categories compatible with tensor 
products and duals \cite[Theorem 2]{Si}, it follows that $(E_*^*,\, \theta^*)$ is associated to 
$((E'_*)^*,\,(\nabla')^*)$ through $NAHC_X$, and $$(E_*^*\otimes L_*,\, \theta^*\otimes \text{Id}_{L_*} +
\text{Id}_{E^*}\otimes 0)\,=\, (E_*^*\otimes L_*,\, \theta^*\otimes \text{Id}_{L_*})$$ is associated to
$((E'_*)^* \otimes L_*, \, (\nabla')^*\otimes \text{Id}_L+\text{id}_{E^*} \otimes D_L)$ through $NAHC_X$.
By \eqref{eq:compatibleHiggs1}, 
$\phi$ is compatible with $\theta$ if and only if, $\phi$ is a homomorphism of parabolic Higgs bundles $\phi\, :\, 
(E_*,\,\theta) \otimes (E_*,\,\theta) \,\longrightarrow\, (L_*,\,0)$. As $NAHC_X$ is a functor, there must exist
a map of quasiparabolic connections $\phi'\,=\, NAHC_X(\phi)$ with
$$\phi' \ : \ NAHC_X((E_*,\,\theta)\otimes (E_*,\,\theta))\ =\ (E'_X,\,\nabla')\otimes (E'_X,\,\nabla')
$$
$$
\,\longrightarrow \ NAHC_X(L_*,\,0)\ =\ (L_*,\,D_L).$$
Moreover, by \eqref{eq:compatibleHiggs2}, $\phi$ is compatible with $\theta$ if and only if it induces an
isomorphism of quasiparabolic Higgs bundles $\widehat\phi \, :\, (E_*,\,\theta) \,\longrightarrow\, (E_*^* ,
\,\theta^*)\otimes (L_*,\,0)$, and, since $NAHC_X$ is an equivalence of categories, this happens if and only
if $\phi'$ induces an isomorphism of quasiparabolic connections $\widehat{\phi}' \, : \, ((E'_*)^*,\,
(\nabla')^*)\otimes (L_*,\,D_L)$. By \eqref{eq:compconn} and \eqref{eq:compconn2}, these conditions are equivalent to $\phi'$ being compatible with $\nabla'$.

On the other hand, $\phi$ is symmetric if and only if $\phi \circ \tau \,=\, \phi$, where $\tau$ is the natural
transposition (see Lemma \ref{lemma:transposition}). It is clear that $NAHC_X(\tau)\,=\,\tau$ and $NAHC_X$ is
an equivalence of categories, so $\phi$ is symmetric if and only if
$$\phi' \circ \tau \,=\, NAHC_X(\phi)\circ NAHC_X(\tau) \,=\, NAHC_X(\phi\circ \tau)\,=\, NAHC_X(\phi)\,=\,\phi'.$$
Thus, $\phi$ is symmetric if and only if $\phi'$ is symmetric. Analogously, $\phi$ is antisymmetric if and
only if $\phi'$ is antisymmetric.

As a consequence, $\phi$ is a parabolic orthogonal (respectively, symplectic) structure on $E_*$ compatible with 
$\theta$ if and only if $\phi'\,=\,NAHC_X(\phi)$ is a parabolic orthogonal (respectively, symplectic) structure on 
$E'_*$ compatible with $\nabla'$.

Finally, from Proposition \ref{prop-1-con} and Proposition \ref{prop-1}, the parabolic polystability of a
parabolic symplectic (respectively, orthogonal) Higgs bundles or a quasiparabolic symplectic (respectively,
orthogonal) connection is equivalent to the polystability of its underlying quasiparabolic Higgs bundle or
quasiparabolic connection (forgetting the symplectic or orthogonal structure), so the Nonabelian Hodge
Correspondence between polystable parabolic Higgs bundles and polystable quasiparabolic connections of
parabolic degree 0 (see \cite{Si}) induces the desired equivalence of categories.
\end{proof}

\begin{proposition}
The correspondence from Theorem \ref{thm:NAHC} is compatible with the pullbacks and direct images described 
through Section \ref{section:pullback} in the following sense:

Let $L_*$ be a parabolic line bundle of parabolic degree $0$ on $X$. Let $f\,:\,Y\,\longrightarrow\, X$ be a 
nonconstant map of Riemann surfaces. Let $(E_*,\, \phi,\, \theta)$ be a polystable quasiparabolic $L_*$--valued 
orthogonal (respectively, symplectic) Higgs bundle, and let $(E'_*,\,\phi',\,\nabla')$ be its 
associated quasiparabolic orthogonal (respectively, symplectic) connection through Nonabelian 
Hodge Correspondence. Then $(f^*E_*,\, f^*\phi,\, f^*\theta)$ and $(f^*E'_*,\,f^*\phi',\,f^*\nabla')$
are polystable and they are associated by Nonabelian Hodge Correspondence.

Similarly, let $\Phi\, : \, X \,\longrightarrow\, Z$ be a nonconstant map. Assume that $L_*\,=\,\Phi^*L'_*$ for some 
parabolic line bundle $L_*'$ on $Z$ of parabolic degree zero. Then $(\Phi_*E_*,\, \Phi_*\phi,\, \Phi_* \theta)$ and 
$(\Phi_*E'_*,\,\Phi_*\phi',\,\Phi_*\nabla')$ are polystable and are associated through Nonabelian Hodge 
Correspondence.
\end{proposition}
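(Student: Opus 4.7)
The plan is to reduce the statement to the unparameterized compatibility of the Nonabelian Hodge Correspondence with pullback and direct image (established in \cite[Theorem 7.3]{AB} and \cite[Theorem 7.5]{AB}), combined with the structured correspondence of Theorem \ref{thm:NAHC}. The polystability assertions come essentially for free: for pullback they follow from Lemma \ref{lem7}(2) and Lemma \ref{lem7-con}(2), and for direct image from Lemma \ref{lem9}(2) and Lemma \ref{lem10}(2).

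For the pullback case, I would adopt the viewpoint used in the proof of Theorem \ref{thm:NAHC}, where an orthogonal or symplectic structure $\phi$ compatible with $\theta$ is encoded as a morphism of parabolic Higgs bundles $\phi\,:\,(E_*,\theta)\otimes(E_*,\theta)\,\longrightarrow\,(L_*,0)$. By that theorem, $\phi'=NAHC_X(\phi)$ is precisely the corresponding morphism $(E'_*,\nabla')\otimes(E'_*,\nabla')\,\longrightarrow\,(L_*,D_L)$ on the connection side. Since $NAHC$ is an equivalence of categories compatible with tensor products and duals (by \cite[Theorem 2]{Si}), with pullbacks (by \cite[Theorem 7.3]{AB}), and since $f^*D_L=D_{f^*L}$ (as noted in Lemma \ref{lem3}), applying $NAHC_Y$ to $f^*\phi$ must yield $f^*\phi'$. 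This is exactly the assertion that $(f^*E_*,\,f^*\phi,\,f^*\theta)$ corresponds through $NAHC_Y$ to $(f^*E'_*,\,f^*\phi',\,f^*\nabla')$.

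The direct image case runs along the same lines, invoking \cite[Theorem 7.5]{AB} in place of \cite[Theorem 7.3]{AB}, together with the projection-formula identification $\Phi_*(E_*^*\otimes L_*)=(\Phi_*E_*)^*\otimes L'_*$ implicit in Lemma \ref{lem4} and the equality $D_L=\Phi^*D_{L'}$ recalled at the start of Section \ref{section:pushforward}. The main technical point requiring care is that $\Phi_*$ does not preserve tensor products: the pairing $\Phi_*\phi$ is not literally the direct image of $\phi$ viewed as a map out of $E_*\otimes E_*$, but is constructed by applying $\Phi_*$ to $\widehat\phi$ and using the projection formula. One must verify that this construction, performed in parallel on the Higgs side and on the connection side, yields corresponding pairings under $NAHC_Z$; equivalently, the natural parabolic homomorphism $(\Phi_*E_*)\otimes(\Phi_*E_*)\,\longrightarrow\,\Phi_*(E_*\otimes E_*)$ mentioned in the introduction must be compatible with $NAHC$ on both sides. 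Once this bookkeeping is settled, the proposition follows formally from the functoriality of the correspondence and the preservation of polystability under the operations involved.
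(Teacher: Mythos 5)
Your proposal is correct and follows essentially the same route as the paper: polystability via Lemmas \ref{lem7}, \ref{lem7-con}, \ref{lem9}, \ref{lem10}, and the correspondence of the pairings by applying the functoriality argument of Theorem \ref{thm:NAHC} together with the compatibility of $NAHC$ with pullback and direct image from \cite[Theorems 7.3 and 7.5]{AB}. The direct-image subtlety you flag is handled in the paper exactly as you suggest, by transporting the adjoint morphism $\Phi_*\widehat\phi$ rather than $\phi$ itself, so no further bookkeeping is needed beyond what you describe.
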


\begin{proof}
The pullback and direct image of the given parabolic orthogonal (respectively, symplectic) Higgs bundle $(E_*,\, \phi,\, 
\theta)$ are polystable parabolic orthogonal (respectively, symplectic) Higgs bundles by Lemma \ref{lem2}, Lemma 
\ref{lem7}, Lemma \ref{lem4} and \ref{lem8}.

Similarly, the pullback and direct image of the given quasiparabolic orthogonal (respectively, symplectic) Higgs bundle 
are polystable quasiparabolic orthogonal (respectively, symplectic) connections by Lemma \ref{lem3}, Lemma 
\ref{lem7-con}, Lemma \ref{lem9} and \ref{lem10}.

By \cite[Theorem 7.3]{AB} and \cite[Theorem 7.5]{AB}, the functor $NAHC_X$ is compatible with parabolic 
pullbacks and direct images of quasiparabolic Higgs bundles and quasiparabolic connections, so 
$(f^*E_*,\,f^*\theta)$ is associated to $(f^*E'_*,\,f^*\nabla')$ through the functor $NAHC_Y$ and 
$(\Phi_*E_*,\,\Phi_*\theta)$ is associated to $(\Phi_*E'_*,\,\Phi_*\nabla')$ through the functor $NAHC_Z$.

The argument in Theorem \ref{thm:NAHC} then implies that the functor $NAHC_X$ sends the isomorphism of parabolic 
Higgs bundles $$f^*\widehat\phi \, : \, (f^*E_*,\,f^*\theta) \,\stackrel{\sim}{\longrightarrow}\, 
(f^*E_*^*,\,f^*\theta^*)\otimes (f^*L_*,\,0)$$ to the isomorphism of quasiparabolic connections $$f^*\widehat\phi' 
\, : \, (f^*E'_*,\,f^*\nabla') \,\stackrel{\sim}{\longrightarrow}\, (f^*(E'_*)^*,f^*(\nabla')^*) \otimes 
(f^*L_*,f^*D_L),$$ so it maps $f^*\phi $ to $f^*\phi'$. Analogously, it sends the isomorphism of parabolic Higgs 
bundles $$\Phi_*\widehat\phi \, : \, (\Phi_*E_*,\,\Phi_*\theta) \,\stackrel{\sim}{\longrightarrow}\, 
(\Phi_*E_*^*,\,\Phi_*\theta^*)\otimes (L'_*,\,0)$$ to the isomorphism of quasiparabolic connections 
$$\Phi_*\widehat\phi' \, : \, (\Phi_*E'_*,\,\Phi_*\nabla') \,\stackrel{\sim}{\longrightarrow}\,
(\Phi_*(E'_*)^*,\,\Phi_*(\nabla')^*) \otimes (L'_*,\,D_{L'}),$$ so it maps $\Phi_*\phi$ to $\Phi_*\phi'$.
\end{proof}

\section*{Acknowledgements}

D.A. was supported by grants PID2022-142024NB-I00 and RED2022-134463-T funded by 
MCIN/AEI/10.13039/501100011033. I.B. is partially supported by a J. C. Bose Fellowship (JBR/2023/000003).


\end{document}